\newtheorem{thm}{Theorem}
\newtheorem{lem}[thm]{Lemma}
\newtheorem{prop}[thm]{Proposition}
\newtheorem{cor}[thm]{Corollary}
\theoremstyle{remark}
\newtheorem{rmk}[thm]{Remark}
\newtheorem{example}[thm]{Example}
\theoremstyle{definition}
\newtheorem{defi}[thm]{Definition}
\numberwithin{thm}{section} 
\numberwithin{equation}{section}
\newcommand{\Rmnum}[1]{\expandafter\@slowromancap\romannumeral #1@}
\def\R{{\mathbb R}}
\def\N{{\mathcal N}}
\def\S{{\mathcal S}}
\newcommand{\Oba}{\overline{\Omega}}
\newcommand{\vep}{\varepsilon}
\newcommand{\ol}{\overline}
\newcommand{\tr}{\operatorname{tr}}
\newcommand{\bpm}{\begin{pmatrix}}
\newcommand{\epm}{\end{pmatrix}}
\newcommand{\la}{\langle}
\newcommand{\ra}{\rangle}
\newcommand{\beq}{\begin{equation}}
\newcommand{\eeq}{\end{equation}}
\title[Liouville-type theorem with boundary degeneracy]{\protect{Liouville-type theorems for fully nonlinear elliptic and parabolic equations with boundary degeneracy}}
\author{Qing Liu}
\address[Qing Liu]{Geometric Partial Differential Equations Unit, Okinawa Institute of Science and Technology 
Graduate University, 1919-1 Tancha, Onna-son, Kunigami-gun, 
Okinawa, 904-0495, Japan}
\email{qing.liu@oist.jp} 
\author{Erbol Zhanpeisov}
\address[Erbol Zhanpeisov]{Geometric Partial Differential Equations Unit, Okinawa Institute of Science and Technology 
Graduate University, 1919-1 Tancha, Onna-son, Kunigami-gun, 
Okinawa, 904-0495, Japan}
\email{erbol.zhanpeisov@oist.jp}
\date{\today}
\begin{document}

\begin{abstract}
We study a class of fully nonlinear boundary-degenerate elliptic equations, for which we prove that $u \equiv 0$ is the only solution. Although no boundary conditions are posed together with the equations, we show that the operator degeneracy actually generates an implicit boundary condition. Under appropriate assumptions on the degeneracy rate and regularity of the operator, we then prove that there exist no bounded solutions other than the trivial one. Our method is based on the arguments for uniqueness of viscosity solutions to state constraint problems for Hamilton-Jacobi equations. We obtain similar results for fully nonlinear degenerate parabolic equations. Several concrete examples of equations that satisfy the assumptions are also given. 
\end{abstract}

\subjclass[2020]{35A02, 35B53, 35D40}
\keywords{degenerate elliptic and parabolic equations, Liouville-type theorems, viscosity solutions, fully nonlinear equations}

\maketitle

\section{Introduction}
In this paper, we present Liouville-type results for a class of fully nonlinear elliptic or parabolic equations with ellipticity of the operator degenerate on the boundary. Our work is motivated by a recent paper \cite{BiPu} on nonexistence of nontrivial solutions to the boundary value problem 
\begin{equation}\label{linear eq}
Q(x) u-\Delta u-\la b(x), \nabla u\ra =0 \quad \text{in $\Omega$, }
\end{equation}
where $\Omega\subset \R^n$ is assumed to be a bounded domain, and $Q\in C(\Omega), b\in C^1(\Omega; \R^n)$ are given functions. The authors of \cite{BiPu} assume that the coefficient $Q$ is positive in $\Omega$ and singular near the boundary in the sense that $Q(x)\to \infty$ as $d(x)\to 0$, where $d(x)$ denotes the distance from $x\in \Oba$ to $\partial \Omega$. The drift term $b$ is also allowed to be unbounded, that is, it may happen that $|b(x)|\to \infty$ as $d(x)\to 0$ as well.  

Note that, due to the positivity of $Q$, \eqref{linear eq} can be equivalently transformed into a degenerate elliptic equation
\begin{equation}\label{linear eq2}
u-\frac{1}{Q(x)} \Delta u-\frac{1}{Q(x)}\la b(x), \nabla u\ra=0 \quad \text{in $\Omega$}
\end{equation}
without changing solutions. It is clear that $u\equiv 0$ is a solution to \eqref{linear eq}. The main result of \cite{BiPu} states that it is actually the only bounded solution under appropriate regularity assumptions on $\Omega$ and several key conditions on the growth rates of $Q$ and $b$ near $\partial \Omega$.    

This Liouville-type result in \cite{BiPu} should also be regarded as a uniqueness result for the associated boundary degenerate elliptic equations. Interestingly, the solutions are still unique despite the absence of boundary conditions. We also refer to \cite{PoPuTe, PuTe} for other related results on the well-posedness of such a type of linear degenerate elliptic and parabolic equations. 
In this work, we pursue similar unique results for general fully nonlinear elliptic and parabolic equations. Let us first consider the elliptic case
\begin{equation}\label{nonlinear eq}
u+F(x, \nabla u, \nabla^2 u)=0\quad \text{in $\Omega$,} 
\end{equation}
where $F: \Omega \times \R^n\times \S^n\to \R$ satisfies the set of assumptions (F1)--(F4) below. Here $\S^n$ stands for the set of all $n\times n$ symmetric matrices. Similar results for parabolic equations will be mentioned later. It is worth pointing out that our PDE setting in this work is different from many other Liouville-type results for viscosity solutions of fully nonlinear equations, for which we refer to \cite{Cap, CuLe, CaCu, BiDe1, ArSi1, ArSi2, ChFe, LuZh, BaCe, BiGaLe, BiDeLe, BaGo1}
 and references therein. 

Our general results apply to a possibly unbounded domain $\Omega$. Concerning its regularity, we impose the following assumption. 
\begin{enumerate}
\item[(A)] For any $x_0\in \partial \Omega$ and $s>0$ small, there exist  $\nu\in C(\Oba\cap B_s(x_0) ; \R^n)$ and $c>0$ small such that 
\begin{equation}\label{bdry reg}
B_{c\tau}(y+\tau \nu(y))\subset \Omega \quad \text{ for all $y\in \Oba\cap B_s(x_0)$ and $0<\tau\leq c$.} 
\end{equation}
\end{enumerate}
Note that any domain of class $C^1$ satisfies this condition. When $\Omega$ is a $C^1$ domain, one can obtain $\nu$ by extending continuously the inward unit normal $\nu$ to $\partial \Omega$ around $x_0$ to $\Oba\cap B_s(x_0)$. We remark that  (A) is a standard assumption for the domain in the study of the so-called state constraint problems; see \cite{So1, So2, BCBook} etc. Our approach is actually closely related to optimal control with state constraints. More details about the connection will be explained later. 

Let us now list the assumptions on the fully nonlinear operator $F$. Throughout this paper, $d(x)$ denotes the distance from $x\in \Oba$ to $\partial \Omega$.
\begin{enumerate}
\item[(F1)] (Continuity) $F\in C(\Omega\times \R^n\times \S^n)$. 
\item[(F2)] (Ellipticity) $F$ is elliptic, i.e., $F(x, p, X)\leq F(x, p, Y)$ for all  $x\in \Omega$, $p\in \R^n$, and $X, Y\in \S^n$ satisfying $X\geq Y$.
\item[(F3)] (Degeneracy) For any modulus of continuity $\omega$ (that is, a nonnegative increasing $\omega\in C([0, \infty))$ with $\omega(0)=0$),  we have 
\begin{equation}\label{degeneracy}
\begin{aligned}
\sup \Big\{|F(x, p, X)|: \  & x\in \Omega,  \ d(x)\leq \delta, \\
& \quad |p|\leq \omega(\delta)/\delta, |X|\leq \omega(\delta)/\delta^2 \Big\}\to  0\quad \text{as $\delta\to 0$}.
\end{aligned}
\end{equation}
\item[(F4)] (Zero solution) $F$ satisfies  
\begin{equation}\label{near zero}
\sup_{x\in \Omega}|F(x, p, X)|\to 0\quad \text{as $|p|, |X|\to 0$.}
\end{equation}
\end{enumerate} 
The condition (F4) ensures that $u\equiv 0$ is a solution of \eqref{nonlinear eq}. When $\Omega$ is assumed to be bounded, we can relax this condition by simply assuming $F(x, 0, 0)=0$ for all $x\in \Omega$.

The assumption (F3) can be viewed as a degeneracy condition of $F$. It actually describes the singularity of coefficients as $Q$ in \eqref{linear eq}. It is clear that the linear operator in \eqref{linear eq}, written in the form \eqref{linear eq2}, corresponds to
\[
F(x, p, X)=-\frac{1}{Q(x)} \tr X-\frac{1}{Q(x)}\la b(x), p\ra, \quad x\in \Omega, p\in \R^n, X\in \S^n
\]
and satisfies (F1)--(F4). In this case, we also have (F3) provided that for any modulus of continuity $\omega$, 
\[
{\omega(d(x))\over Q(x)d(x)^2}+{|b(x)|\omega(d(x))\over Q(x)d(x)}\to 0\quad \text{ uniformly as $d(x)\to 0$.}
\]
This indeed holds true if there exists $C>0$ such that 
\[
Q(x)\geq {C\over d(x)^2}, \quad |b(x)|\leq {C\over d(x)^\alpha} \quad \text{with $0<\alpha\leq 1$}
\]
for all $x\in \Omega$ near $\partial \Omega$, which is consistent with the assumptions in \cite{BiPu}. The condition turns out to be optimal as shown in the examples in \cite{BiPu}.

Given the fully nonlinear structure of \eqref{nonlinear eq}, the notion of viscosity solution becomes a natural choice for us to understand properties of the equation. Our focus is to show the nonexistence of bounded viscosity solutions $u$ of \eqref{nonlinear eq} other than the trivial one $u\equiv 0$. It turns out that we can obtain such a result for any operator $F$ satisfying (F1)--(F4) if we only consider bounded solutions that can be extended to a continuous function in $\Oba$.  

\begin{thm}[Liouville-type theorem for boundary continuous solutions]\label{thm main}
Let $\Omega\subset \R^n$ be a domain satisfying (A). Assume that (F1)--(F4) hold. If $u\in C(\Omega)$ is a viscosity solution to \eqref{nonlinear eq} that can be continuously extended to $\partial \Omega$, then $u\equiv 0$ in $\Omega$. 
\end{thm}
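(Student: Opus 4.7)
I would proceed by contradiction. Since $(x,p,X)\mapsto -F(x,-p,-X)$ also satisfies (F1)--(F4), we may replace $u$ by $-u$ if needed and assume $M:=\sup_{\Omega} u>0$. The continuous extension of $u$ to $\Oba$ attains $M$ at some $x^*\in\Oba$ (restricting to a compact subset if $\Omega$ is unbounded). If $x^*\in\Omega$, the constant $\vi\equiv M$ is a valid test function from above at $x^*$, and the viscosity subsolution inequality combined with $F(\cdot,0,0)\equiv 0$ (a consequence of (F4)) immediately yields $M\le 0$, a contradiction. The real task is therefore to rule out the boundary case $x^*\in\pO$.

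For $x^*\in\pO$, let $\nu:=\nu(x^*)$ be the inward vector from (A). For parameters $\lambda,\mu>0$ to be chosen, I would test with the $C^2$ function
\[
\tilde\vi(x) = M - \lambda\la x-x^*,\nu\ra + \mu|x-x^*|^2.
\]
The linear term rewards inward displacement from $x^*$, while the quadratic localises the analysis. Using $u\le M$ and the cone inequality $\la y-x^*,\nu\ra\le C|y-x^*|$ with some $C<1$ for $y\in\pO$ close to $x^*$ (a direct consequence of the inner-ball condition (A)), one checks that the maximum of $u-\tilde\vi$ over $\Oba\cap\overline{B_r(x^*)}$ is strictly larger on the inward ray $\{x^*+t\nu\}$ than on $\pO$, hence attained at some interior $\hat x$ with $|\hat x-x^*|=O(\lambda/\mu)$; a further application of (A) gives $d(\hat x)\gtrsim \lambda/\mu$. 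The subsolution inequality at $\hat x$ then reads
\[
u(\hat x)+F\bigl(\hat x,\nabla\tilde\vi(\hat x),\nabla^2\tilde\vi(\hat x)\bigr)\le 0,
\]
with $|\nabla\tilde\vi(\hat x)|=O(\lambda)$ and $\nabla^2\tilde\vi(\hat x)=2\mu I$.

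I would then let $\mu\to\infty$ and couple $\lambda=\lambda(\mu)$ so that simultaneously (i) the interior maximum of $u-\tilde\vi$ stays strictly positive, which requires $\lambda^2/\mu$ to dominate $\omega_u(\lambda/\mu)$, where $\omega_u$ is the local modulus of continuity of $u$ at $x^*$ (arranged for instance by $\lambda\delta\sim\sqrt{\omega_u(\delta)}$ with $\delta=\lambda/\mu$), and (ii) $\lambda^2/\mu\to 0$. With the modulus $\omega(\delta):=C\sqrt{\omega_u(\delta)}$, the triple $(\hat x,\nabla\tilde\vi,\nabla^2\tilde\vi)$ falls inside the set on which (F3) forces $|F|$ to tend to zero. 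Thus $F(\hat x,\nabla\tilde\vi,\nabla^2\tilde\vi)\to 0$, while $u(\hat x)\to u(x^*)=M$ by continuity, and the subsolution inequality in the limit reduces to $M\le 0$, the desired contradiction. The main obstacle is precisely this three-parameter juggling among $\lambda$, $\mu$, and the intrinsic modulus $\omega_u$: one needs (A) both to supply the field $\nu$ and to secure the lower bound on $d(\hat x)$, while (F3) encodes exactly the degeneracy rate needed to annihilate $F$ in the limit, implementing the state-constraint philosophy in the present fully nonlinear second-order setting.
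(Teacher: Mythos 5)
Your strategy differs from the paper's in a meaningful way: rather than first isolating the implicit Dirichlet condition $u=0$ on $\pO$ via a doubled-variable state-constraint test function $\Phi_\vep(x,y)=u^\ast(x)-\left|\tfrac{x-y}{\vep}-\nu(y)\right|^4-\tfrac{|y-x_0|^2}{\vep^2}$ (Proposition \ref{prop singular bdry}, following the Soner uniqueness argument) and then applying a maximum principle, you push the boundary maximum away from $\pO$ directly with a single-variable barrier $\tilde\vi(x)=M-\lambda\la x-x^*,\nu\ra+\mu|x-x^*|^2$. For \emph{bounded} domains the boundary-case argument is essentially sound: the cone inequality $\la y-x^*,\nu\ra\le\sqrt{1-c^2}\,|y-x^*|$ for $y\in\pO$ near $x^*$ does follow from (A) (after normalizing $\nu$), the positivity of $u-\tilde\vi$ on the inward ray at $t=\lambda/(2\mu)$ with the coupling $\lambda\delta\sim\sqrt{\omega_u(\delta)}$, $\delta=\lambda/(2\mu)$, forces an interior maximizer $\hat x$ with $|\hat x-x^*|\sim\lambda/\mu$, hence $d(\hat x)\lesssim\lambda/\mu$; and the bounds $|\nabla\tilde\vi(\hat x)|\lesssim\lambda$, $|\nabla^2\tilde\vi|=2\mu$ do fall inside the set in (F3) with the admissible modulus $\omega:=C\sqrt{\omega_u}$. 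One small correction: the claim that ``a further application of (A) gives $d(\hat x)\gtrsim\lambda/\mu$'' is neither obvious ($\hat x$ could be displaced tangentially toward a curving boundary) nor needed, since (F3) only requires an \emph{upper} bound on $d(\hat x)$.

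The genuine gap is the unbounded case, which the theorem allows since (A) does not require $\Omega$ bounded. Your opening step, ``the continuous extension of $u$ to $\Oba$ attains $M$ at some $x^*\in\Oba$ (restricting to a compact subset if $\Omega$ is unbounded),'' does not work: if $M=\sup_\Omega u$ is approached only at infinity, then for the truncation $\Omega\cap B_R$ the maximizer may lie on the artificial boundary $\Omega\cap\partial B_R$, where neither the interior equation (you would need it to be a local max over all of $\Oba$) nor the degeneracy condition (F3) gives you anything. The paper's proof of Theorem \ref{thm main} handles this by penalizing at infinity with $\phi_\vep(x)=\vep(|x-x_0|^2+1)^{1/2}$; since $u$ is bounded, $u-\phi_\vep$ attains its maximum at some $\hat x\in\Oba$, and then the \emph{full} assumption (F4) --- uniformity of the decay of $F$ as $|p|,|X|\to 0$, not merely $F(\cdot,0,0)\equiv 0$ --- is invoked to make $F(\hat x,\nabla\phi_\vep(\hat x),\nabla^2\phi_\vep(\hat x))$ small. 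To complete your proof for general $\Omega$ you would need to add such a penalization to $\tilde\vi$ and carry it through the boundary barrier estimates, and in the interior case replace the constant test function by $\phi_\vep$ and use (F4), not just $F(x,0,0)=0$.
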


The proof in \cite{BiPu} for the linear equation \eqref{linear eq} is based on careful a priori estimates via the associated parabolic problem. Since the strong nonlinearity and non-divergence nature of \eqref{nonlinear eq} no longer enable us to directly establish integral estimates, we adopt a different approach, looking into the boundary behavior of solutions via comparison with appropriate smooth barrier functions.  
Our primary observation about this problem is that if $u$ is a solution satisfying $u\neq 0$ in a neighborhood of a boundary point $x_0\in \partial \Omega$, then $u$ cannot be extended to a $C^2$ function around $x_0$, for otherwise (F3) yields a contradiction that 
\[
0=\lim_{x\to x_0}  \{u(x)+ F(x, \nabla u(x), \nabla^2 u(x))\}= u(x_0)\neq 0.
\] 

In the framework of viscosity solutions, where $C^2$ regularity of solutions is usually not expected, one can deal with the degeneracy condition (F3) via suitable test functions, which  are used to handle the aforementioned state constraint problems. In fact, we adapt the uniqueness proof in \cite{So1, So2} to show that if any viscosity solution $u\in C(\Omega)$ of \eqref{nonlinear eq} can be continuously extended to $\Omega \cup \{x_0\}$ (for some $x_0 \in \partial \Omega$), then $u(x_0)=0$ holds. A more precise statement of this result is given in Proposition \ref{prop singular bdry}.  

Our interpretation above suggests that, although no explicit boundary conditions are imposed in this problem, the degeneracy of the equation \eqref{nonlinear eq} leads us to a Dirichlet boundary condition 
\begin{equation}\label{dirichlet}
u=0 \quad \text{on $\partial \Omega$};
\end{equation}
 see Proposition \ref{prop dirichlet}.  Once the homogeneous Dirichlet condition is obtained, the conclusion $u\equiv 0$ in $\Omega$ then follows easily from the standard maximum principle.

Our result in Theorem \ref{thm main} applies to a very broad class of fully nonlinear elliptic equations. The extendability of $u$ to a function in $C(\Oba)$ plays a key role. One sufficient condition that guarantees the continuous extension is uniform continuity of $u$ in $\Omega$, but such regularity for viscosity solutions of degenerate elliptic equations is not entirely clear and needs to be further investigated. 
 We thus turn to a more specific class of equations that enjoy the Liouville-type property without requiring the extendability condition on solutions. To this end, we impose (F5) below on $F$ in addition to (F1)--(F4). 
  
\begin{enumerate}
\item[(F5)] For any $M>0$, there exist $a_F, L>0$ and $\alpha\in (0, 1], \beta\in (0, 1)$ possibly depending on $M$ such that 
\begin{equation}\label{uc-cond}
F(y, q, Y) -  F(x, p, X) \leq  \beta a_F |x-y|^\alpha+L\delta
\end{equation}
holds for any $\delta>0$, $x, y\in \Omega$ with $x\neq y$, $p, q\in \R^n$ and $X, Y\in \S^n$ provided that
\begin{equation}\label{uc-bounds}
F(x, p, X)\leq M,\ \ F(y, q, Y)\geq -M, 
\end{equation}
\begin{equation}\label{a5-1}
\begin{aligned}
&|p-\alpha a_F |x-y|^{\alpha-2}(x-y)| \leq \delta d(x)^{-1}, \\
&|q- \alpha a_F |x-y|^{\alpha-2}(x-y)|\leq  \delta d(y)^{-1},  
\end{aligned}
\end{equation}
as well as 
\begin{equation}\label{a5-2}
 \begin{pmatrix} X & 0\\ 0& -Y\end{pmatrix}\leq \begin{pmatrix} A & -A \\ -A & A\end{pmatrix}+\delta \begin{pmatrix} {d(x)^{-2}} I_n & 0\\ 0 & {d(y)^{-2}} I_n \end{pmatrix}+ \delta I_{2n} 
\end{equation}
hold with $A=A(x, y)\in \S^n$ satisfying 
\begin{equation}\label{uc-mod-rev}
A\leq \alpha a_F |x-y|^{\alpha-2} I_n. 
\end{equation}
\end{enumerate}

Our second Liouville-type result is as follows. 
\begin{thm}[Liouville-type theorem for continuous solutions]\label{thm main2}
Let $\Omega\subset \R^n$ be a bounded domain of $C^2$ class. Assume that (F1)--(F5) hold. If $u\in C(\Omega)$ is a bounded viscosity solution to \eqref{nonlinear eq}, then $u\equiv 0$ in $\Omega$. 
\end{thm}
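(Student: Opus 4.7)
The natural strategy is to use (F5) to upgrade a generic bounded viscosity solution $u\in C(\Omega)$ into one that is uniformly $\alpha$-H\"older continuous on $\Omega$, and hence continuously extendable to $\Oba$; Theorem~\ref{thm main} then delivers $u\equiv 0$. The key tool is a doubling-of-variables argument with a boundary-penalty, in which (F5) plays the role of the structural condition used in standard viscosity comparison proofs.

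\emph{Setup.} Since $\pO$ is of class $C^2$, the distance function $d$ is $C^2$ in a tubular neighborhood of $\pO$, and I fix $\eta\in C^2(\Omega)$ with $\eta\ge 0$, $\eta(x)\to+\infty$ as $d(x)\to 0$, $|\nabla\eta(x)|\le C_0/d(x)$, and $D^2\eta(x)\le C_0 d(x)^{-2}I_n$ for some $C_0>0$ (for instance, $\eta=-\log d$ near $\pO$, extended smoothly inside). Set $M=\|u\|_{L^\infty(\Omega)}$ and let $a_F,L,\alpha,\beta$ be the constants from (F5) corresponding to $M$. For small $\sigma>0$ consider
\[
\Phi_\sigma(x,y)=u(x)-u(y)-a_F|x-y|^\alpha-\sigma(\eta(x)+\eta(y)),\qquad (x,y)\in\Omega\times\Omega.
\]
Since $\eta$ blows up at $\pO$ and $u$ is bounded, $\Phi_\sigma$ attains its supremum at some interior point $(\hat x,\hat y)\in\Omega\times\Omega$.

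\emph{Ishii's lemma and (F5).} If $\hat x=\hat y$, then $\sup\Phi_\sigma\le-2\sigma\eta(\hat x)\le 0$. Otherwise the test function $\phi(x,y)=a_F|x-y|^\alpha+\sigma(\eta(x)+\eta(y))$ is $C^2$ at $(\hat x,\hat y)$, and for each $\vep>0$ the Crandall-Ishii lemma produces matrices $X,Y\in\S^n$ with $(p,X)\in\ol J^{2,+}u(\hat x)$ and $(q,Y)\in\ol J^{2,-}u(\hat y)$, where
\[
p=\alpha a_F|\hat x-\hat y|^{\alpha-2}(\hat x-\hat y)+\sigma\nabla\eta(\hat x),\quad q=\alpha a_F|\hat x-\hat y|^{\alpha-2}(\hat x-\hat y)-\sigma\nabla\eta(\hat y),
\]
and the matrix inequality \eqref{a5-2} holds with
\[
A=\alpha a_F|\hat x-\hat y|^{\alpha-2}I_n+\alpha(\alpha-2)a_F|\hat x-\hat y|^{\alpha-4}(\hat x-\hat y)(\hat x-\hat y)^T\le\alpha a_F|\hat x-\hat y|^{\alpha-2}I_n
\]
(using $\alpha<1$) and $\delta:=\max(C_0\sigma,\vep)$. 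The gradient bounds in \eqref{a5-1} and the size condition \eqref{uc-mod-rev} are then built in, while \eqref{uc-bounds} is verified with constant $M$ from $F(\hat x,p,X)\le-u(\hat x)\le M$ and $F(\hat y,q,Y)\ge-u(\hat y)\ge-M$.

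\emph{Conclusion.} Subtracting the subsolution inequality at $\hat x$ from the supersolution inequality at $\hat y$ and applying (F5) gives
\[
u(\hat x)-u(\hat y)\le F(\hat y,q,Y)-F(\hat x,p,X)\le\beta a_F|\hat x-\hat y|^\alpha+L\delta,
\]
so $\sup\Phi_\sigma=\Phi_\sigma(\hat x,\hat y)\le-(1-\beta)a_F|\hat x-\hat y|^\alpha+L\delta\le L\delta$. Combining both the diagonal and off-diagonal cases, $\sup_{\Omega\times\Omega}\Phi_\sigma\le L\delta$, which rewrites as
\[
u(x)-u(y)\le a_F|x-y|^\alpha+\sigma(\eta(x)+\eta(y))+L\delta\qquad\text{for all } x,y\in\Omega.
\]
Fixing $x,y\in\Omega$ and letting $\sigma,\vep\to 0$ yields $u(x)-u(y)\le a_F|x-y|^\alpha$; swapping $x$ and $y$ gives $|u(x)-u(y)|\le a_F|x-y|^\alpha$ throughout $\Omega$. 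Thus $u$ extends continuously to $\Oba$, and Theorem~\ref{thm main} concludes $u\equiv 0$. The main technical obstacle lies in calibrating $\eta$ and $\vep$ so that the bounds $\delta/d(\cdot)$ and $\delta/d(\cdot)^2$ in \eqref{a5-1}-\eqref{a5-2} absorb the penalty derivatives uniformly; the non-smoothness of $|x-y|^\alpha$ at the diagonal is neutralized by the observation that $\hat x=\hat y$ already forces $\sup\Phi_\sigma\le 0$.
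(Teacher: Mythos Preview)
Your proposal is correct and follows essentially the same route as the paper: derive uniform $\alpha$-H\"older continuity of $u$ via doubling variables with a logarithmic boundary penalty, then invoke Theorem~\ref{thm main}. The paper's Proposition~\ref{prop uc} uses precisely the penalty $\vep\log(\tilde d(x)/D)+\vep\log(\tilde d(y)/D)$ (your $-\sigma\eta$ with $\eta\approx-\log d$), applies the Crandall--Ishii lemma, and feeds the resulting gradients and matrix inequality into (F5). One cosmetic difference: the paper argues by contradiction, assuming $u(x_0)-u(y_0)>a_F|x_0-y_0|^\alpha$, and at the end must split into two cases according to whether the limit of the maximizers lies in $\Omega$ or on $\partial\Omega$; your direct bound $\sup\Phi_\sigma\le L\delta$ followed by $\sigma,\vep\to0$ sidesteps that case analysis and is in fact slightly cleaner. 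A tiny slip: the inequality $A\le\alpha a_F|\hat x-\hat y|^{\alpha-2}I_n$ needs only $\alpha\le2$ (so that $\alpha(\alpha-2)\le0$), not $\alpha<1$; since (F5) allows $\alpha\in(0,1]$ this does not affect the argument.
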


It turns out that under the assumptions in Theorem \ref{thm main2}, we can derive (global) H\"older continuity in a bounded and $C^2$ domain $\Omega$ for any bounded continuous solution. The additional boundedness and regularity assumptions on $\Omega$ are needed for us to have uniform boundedness and $C^2$ regularity of the distance function $d$. 
This result is shown in Proposition \ref{prop uc}, where we essentially develop the arguments of \cite[Theorem 3.1]{I84}, \cite[Theorem VII.1]{IL} and \cite[Chapter II, Proposition 4.2]{BCBook} for Lipschitz or H\"older regularity of viscosity solutions to Hamilton-Jacobi equations. A direct application of Theorem \ref{thm main} then immediately yields Theorem~\ref{thm main2}.  

Let us give more explanations about (F5). Roughly speaking, if we take $\delta=0$ in (F5) for the moment, then \eqref{uc-cond}, together with other conditions in (F5), basically amounts to saying that 
\[
\psi(x, y):= a_F|x-y|^\alpha
\]
is a strict supersolution (except at $x=y$) of 
\[
\psi(x, y)+F(x, \nabla_x\psi, \nabla_x^2\psi)-F(y, \nabla_y \psi, \nabla_y^2\psi)=0.
\]
Since $u(x)-u(y)$ is a subsolution of the same equation, by comparison principle it is thus reasonable to expect that $u(x)-u(y)\leq \psi(x, y)$ for $x, y\in \Omega$, which is indeed proved rigorously in Proposition~\ref{prop uc} in the framework of viscosity solutions. The terms with the parameter $\delta$ are related to an extra penalization near the boundary. See Lemma \ref{lem f5} for a sufficient condition of (F5), which is useful for us to understand concrete examples in Section~\ref{sec:example}. 

Our perspective of generating boundary conditions from the degeneracy can be naturally extended to fully nonlinear parabolic equations. Let $u\in C(\Oba\times (0,T))$ be a viscosity solution of 
\begin{equation}\label{parabolic eq}
    u_t+H(x, u, \nabla u, \nabla^2 u)=0\quad \text{in $\Omega\times (0, T)$}
\end{equation}
where $T>0$ is a given constant and $H$ is a continuous elliptic operator. If 
\begin{equation}\label{degeneracy general2}
\begin{aligned}
\sup \Big\{|H(x, r, p, X)-G(y, r)|:& \  x\in \Omega,\ y\in \partial \Omega,  \ |x-y|\leq \delta, \ r\in \R, \\
&  |p|\leq \omega(\delta)/\delta, |X|\leq \omega(\delta)/\delta^2 \Big\}\to  0\quad \text{as $\delta\to 0$}
\end{aligned}
\end{equation}
for some $G\in C(\partial \Omega\times \R)$, then we can derive a nonlinear dynamic boundary condition $u_t+G(x, u)=0$ upon tests of $C^2$ functions on $ \partial \Omega\times (0, T)$. See Proposition \ref{prop general bdry} for more details. An analogous Liouville-type result in this general case is obtained in Theorem \ref{thm liouville general}. In Theorem \ref{thm main3}, we also present a parabolic version of Theorem \ref{thm main2}. As in the elliptic case, in order for the equation to admit only trivial solutions, the exponent range for boundary degeneracy with coefficient $d(x)^\mu$ of the second order operator is still $\mu\geq 2$. Related uniqueness results for linear boundary-degenerate parabolic equations are established in \cite{IshMu1, IshMu2} by adopting intrinsic metrics associated to the equations. Solvability of linear boundary-degenerate parabolic equations in a half space with boundary degree $\mu<2$ and prescribed Dirichlet condition has been recently studied in \cite{DPT1, DPT2} using different methods.

Section \ref{sec:example} is devoted to more precise applications of our results. For $C^2$ bounded domains, we provide several concrete examples that satisfy the assumptions in Theorem
~\ref{thm main2}, especially the condition (F5). Our examples include several important types of elliptic operators such as the Laplacian, the viscous Hamilton-Jacobi-Isaacs operator and the Pucci operator, all carried with boundary degenerate coefficients. We can also apply Theorem~\ref{thm main2} to a class of first order Hamilton-Jacobi equations, where the critical value of the exponent $\mu$ changes in accordance with the growth of the Hamiltonian with respect to the momentum variable. 

Let us conclude the introduction with a comparison between our results and those known on the state constraint problems in \cite{So1, So2, IK3, Kat, BaBu, BaRo, ILo} as well as other boundary value problems. While our approach shows a strong connection to the uniqueness arguments of the state constraint problems, we would like to highlight two major differences. 

(1) The purpose of this work is different. The state constraint boundary condition is usually posed to model state-constrained control processes in the aforementioned papers, among which \cite{Kat, BaBu, BaRo, ILo} actually also allow boundary-degenerate ellipticity. We in contrast do not impose any boundary condition in our PDE setting. The boundary condition derived in our analysis arises naturally from the operator degeneracy. Our estimate for the uniform continuity to prove Theorem~\ref{thm main2} is also obtained without using any boundary condition. We refer to \cite{FePo1, FePo2, DP1, DP2, DP3, DPT1, DPT2} for well-posedness for linear degenerate elliptic or parabolic equations with prescribed Dirichlet boundary conditions. Our setting is different from theirs as well. 

 (2) In addition, our results have a broader scope of PDE applications. Since the continuity of solutions up to the boundary for state constraint problems is obtained from the optimal control interpretation, it requires the equation to be of the convex Hamilton-Jacobi type. We however apply PDE methods to prove this regularity, and thus we can consider more general degenerate elliptic or parabolic equations, including first order nonconvex Hamilton-Jacobi equations. We do not need the optimal control formulation for the existence of solutions, as $u\equiv 0$ is always a solution by (F4). 

\subsection*{Acknowledgments}
The authors would like to thank Daniel~Hauer, Kazuhiro~Ishige, Hiroyoshi~Mitake and Hung~V.~Tran for helpful discussions and references. The authors are also grateful to the anonymous referee for valuable comments. 
The work of QL was supported by JSPS Grant-in-Aid for Scientific 
Research (No.~19K03574, No.~22K03396).  
The work of EZ was supported by JSPS Grant-in-Aid for Early-Career Scientists (No.~23K13005) and Fostering Joint International Research (No.~22KK0035).

\section{Implicit boundary conditions}
Let us go over the notion of viscosity solution.  
See \cite{CIL} for a comprehensive introduction to the theory of viscosity solutions. 
Hereafter we denote by $USC(S)$ and $LSC(S)$ respectively the classes of upper and lower semicontinuous functions in a set $S\subset \R^n$.

\begin{defi}\label{def viscosity}
A locally bounded function $u\in USC(\Omega)$ (resp., $LSC(\Omega)$) is called a viscosity subsolution (resp., supersolution) of \eqref{nonlinear eq} if whenever there exist $x_0\in \Omega$ and $\varphi\in C^2(\Omega)$ such that $u-\varphi$ attains a local maximum (resp., minimum) at $x_0$, we have 
\[
u(x_0)+F(x_0, \nabla \varphi(x_0), \nabla^2 \varphi(x_0))\leq 0
\]
\[
\left(\text{resp.,}\ \ u(x_0)+F(x_0, \nabla \varphi(x_0), \nabla^2 \varphi(x_0))\geq 0 \right).
\]
A function $u\in C(\Omega)$ is called a viscosity solution of \eqref{nonlinear eq} if it is both a viscosity subsolution and a viscosity supersolution of \eqref{nonlinear eq}.
\end{defi}

\begin{rmk}\label{rmk jets}
It is well known that viscosity tests can be characterized via the so-called semijets $J^{2, \pm} u$. More precisely, we have
\[
\begin{aligned}
&J^{2, +} u(x)=\{(\nabla \varphi(x), \nabla^2\varphi(x))\in \R^n\times \S^n: \text{ $u-\varphi$ attains a local maximum at $x$}\}\\
&=\left\{(p, X)\in \R^n\times \S^n: u(y)\leq u(x)+\la p, y-x \ra+ {1\over 2}\la X (y-x), y-x\ra+o(|y-x|^2)\right\}
\end{aligned}
\]
for any $x\in \Omega$ and $y$ near $x$, and $J^{2, -}u=-J^{2, +} (-u)$. Therefore, a function $u\in USC(\Omega)$ (resp., $u\in LSC(\Omega)$) is a viscosity subsolution (resp., supersolution) of \eqref{nonlinear eq} if and only if 
\[
u(x_0)+F(x_0, p_0, X_0)\leq 0 \quad \text{(resp., $\geq 0$)}
\]
holds for all $(p_0, X_0)\in J^{2, +} u(x_0)$ (resp., $(p_0, X_0)\in J^{2, -}u(x_0)$). Furthermore, it is known that, under the assumption (F1), one can replace $J^{2, \pm} u$ by their closures $\ol{J}^{2, \pm} u$ in this definition. See \cite{CIL} for more details. 
\end{rmk}
For our convenience, when referring to viscosity solutions later, we shall always omit the term ``viscosity''. 

Let us next discuss the boundary condition arising from (F3). We denote by $B_r(x)$ the open ball in $\R^n$ centered at $x$ with radius $r>0$. Moreover, for a bounded function $u$ in a set $S\subset \R^n$, we denote by $u^\ast$ and by $u_\ast$ its upper and lower semicontinuous envelopes respectively in the closure $\ol{S}$ of $S$; namely, for any $x\in \ol{S}$, we take
\[
\begin{aligned}
u^\ast(x)&= \lim_{\delta\to 0} \sup\{u(y): y\in S, \ |y-x|\leq \delta\},\\
u_\ast(x)&= \lim_{\delta\to 0} \inf\{u(y): y\in S, \ |y-x|\leq \delta\}.
\end{aligned}
\]

\begin{prop}[Implicit boundary conditions]\label{prop singular bdry}
Let $\Omega\subset \R^n$ be a domain satisfying (A). Assume that (F1), (F2) hold.  Let $x_0\in \partial \Omega$. 
\begin{enumerate}
\item[1)] Assume that for any modulus of continuity $\omega$,
\begin{equation}\label{degeneracy sub}
\qquad \lim_{\delta\to 0+}\inf \Big\{F(x, p, X): \  x\in \Omega,  \ |x-x_0|\leq \delta, 
|p|\leq \omega(\delta)/\delta, |X|\leq \omega(\delta)/\delta^2 \Big\}\geq  0.
\end{equation}
If $u\in USC(\Omega)$ is a bounded subsolution to \eqref{nonlinear eq} and $u^\ast$ is continuous at $x_0\in \partial \Omega$, then $u^\ast(x_0)\leq 0$.
\item[2)] Assume that for any modulus of continuity $\omega$, 
\begin{equation}\label{degeneracy sup}
\qquad \lim_{\delta\to 0+}\sup \Big\{F(x, p, X): \   x\in \Omega,  \ |x-x_0|\leq \delta, 
|p|\leq \omega(\delta)/\delta, |X|\leq \omega(\delta)/\delta^2 \Big\}\leq  0.
\end{equation}
If $u\in LSC(\Omega)$ is a bounded supersolution to \eqref{nonlinear eq} and $u_\ast$ is continuous at $x_0\in\partial \Omega$, then $u_\ast(x_0)\geq 0$.   
\end{enumerate}
In particular, if we assume (F3), then any solution $u\in C(\Omega)$ of \eqref{nonlinear eq} that is continuous at $x_0\in \partial \Omega$ satisfies $u(x_0)=0$.
\end{prop}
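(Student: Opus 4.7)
The conclusion $u(x_0)=0$ reduces at once to parts (1) and (2). Since (F3) gives both the lower bound needed in (3.1) and the upper bound needed in (3.2), part (1) applied to $u$ as a subsolution yields $u^*(x_0)\le 0$, and part (2) applied to $u$ as a supersolution yields $u_*(x_0)\ge 0$. Continuity of $u\in C(\Omega)$ at $x_0$ then forces $u^*(x_0)=u_*(x_0)=u(x_0)$, giving $u(x_0)=0$. The real work is therefore to prove (1); part (2) follows by the symmetric argument applied to $-u$ together with the operator $G(x,p,X):=-F(x,-p,-X)$, which inherits ellipticity and the relevant version of (3.1) from (3.2).

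For (1), I would argue by contradiction: assume $u^*(x_0)=m>0$. Choose $r>0$ so small that $\Oba\cap\overline{B_r(x_0)}\subset B_s(x_0)$ (with $s$ as in (A)), and for each large $A>0$ let $\bar x_A$ attain $\max_{\Oba\cap\overline{B_r(x_0)}}\Psi_A$, where $\Psi_A(x):=u^*(x)-A|x-x_0|^2$. The inequality $\Psi_A(\bar x_A)\ge\Psi_A(x_0)=m$ combined with boundedness of $u^*$ yields $A|\bar x_A-x_0|^2\le u^*(\bar x_A)-m\le\|u\|_\infty-m$, so $\bar x_A\to x_0$ as $A\to\infty$. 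Continuity of $u^*$ at $x_0$ then gives $u^*(\bar x_A)\to m$, and consequently $A|\bar x_A-x_0|^2\to 0$.

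If some subsequence $\bar x_{A_n}$ lies in the interior $\Omega$, then the viscosity test at the interior maximum furnishes $(p_n,X_n):=(2A_n(\bar x_{A_n}-x_0),\,2A_nI)\in J^{2,+}u^*(\bar x_{A_n})$. With $\delta_n:=|\bar x_{A_n}-x_0|$, one computes $|p_n|\delta_n=|X_n|\delta_n^2=2A_n\delta_n^2\to 0$; hence I can construct a modulus of continuity $\omega$ with $\omega(\delta_n)\ge 2A_n\delta_n^2$ for every $n$ (starting from $\omega_0(\delta):=\sup\{2A|\bar x_A-x_0|^2:|\bar x_A-x_0|\le\delta\}$, which tends to $0$ as $\delta\to 0$, then replacing it by a continuous nondecreasing majorant with $\omega(0)=0$). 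Applying (3.1) with this $\omega$ gives $\liminf_n F(\bar x_{A_n},p_n,X_n)\ge 0$, and the subsolution inequality $u^*(\bar x_{A_n})+F(\bar x_{A_n},p_n,X_n)\le 0$ then forces $m=\lim u^*(\bar x_{A_n})\le 0$, a contradiction.

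If instead $\bar x_A\in\partial\Omega$ for every large $A$, I would perform a Soner-type boundary perturbation: modify the test to $\varphi_{A,\varepsilon}(x):=A|x-x_0|^2-\varepsilon g(x)$, where $g\in C^2(\Omega)$ is positive in $\Omega$, vanishes on $\partial\Omega\cap B_s(x_0)$, and has controlled derivatives near the boundary (taking $g=d$ when $\Omega$ is $C^2$, and an appropriate smooth proxy built from the continuous inward vector field $\nu$ in the general (A) setting). The inner-ball condition (A) supplies interior competitors $x_0+\tau\nu(x_0)\in\Omega$ with $d\ge c\tau$, so for suitable $\tau$ and $\varepsilon=\varepsilon(A)\to 0$ the perturbed maximum of $u^*-\varphi_{A,\varepsilon}$ strictly exceeds its boundary values and is attained at an interior point $x_{A,\varepsilon}\in\Omega$. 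Tuning $\varepsilon$ so that $\varepsilon|\nabla g(x_{A,\varepsilon})|$ and $\varepsilon|\nabla^2 g(x_{A,\varepsilon})|$ contribute negligibly keeps $|p|\delta$ and $|X|\delta^2$ tending to~$0$, and (3.1) together with the interior subsolution test again yields $m\le 0$. The main obstacle is precisely this last step: building a $C^2$ boundary-vanishing $g$ in the general (A) setting (where $d$ itself is typically not $C^2$) and balancing the parameters $A,\varepsilon,\tau$ so that the perturbed maximum stays in the interior while the test-function derivatives remain inside the modulus envelope required by (3.1); this is exactly where the uniform inner-ball geometry encoded by (A) becomes essential.
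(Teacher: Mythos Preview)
Your reduction of the final statement to parts (1) and (2), and your dispatch of (2) by symmetry, are fine. In part (1), your interior-case argument (when a subsequence of maximizers $\bar x_{A_n}$ lies in $\Omega$) is essentially correct: the quadratic test gives $(p_n,X_n)=(2A_n(\bar x_{A_n}-x_0),\,2A_nI)$ with $|p_n|\delta_n=|X_n|\delta_n^2=2A_n\delta_n^2\to 0$, so a suitable modulus $\omega$ can be manufactured and (3.1) applies.

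The genuine gap is your boundary case. You propose to perturb by $-\varepsilon g(x)$ with $g\in C^2(\Omega)$ vanishing on $\partial\Omega$, but under assumption (A) alone there is in general no such barrier with controlled second derivatives: $\nu$ is merely continuous, so there is no obvious way to build a $C^2$ proxy for $d$, and without a bound on $\varepsilon|\nabla^2 g|$ near $\partial\Omega$ you cannot keep $|X|\delta^2$ small. You correctly identify this as the obstacle, but you do not resolve it, and the case split itself forces you into exactly this difficulty.

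The paper sidesteps the dichotomy entirely by a Soner-type \emph{doubling of variables}. One maximizes
\[
\Phi_\varepsilon(x,y)=u^\ast(x)-\Bigl|\tfrac{x-y}{\varepsilon}-\nu(y)\Bigr|^{4}-\frac{|y-x_0|^2}{\varepsilon^2}
\]
over $\overline{\Omega\cap B_r(x_0)}\times\overline{\Omega\cap B_r(x_0)}$. Comparing with $(x,y)=(x_0+\varepsilon\nu(x_0),x_0)$ gives $|x_\varepsilon-x_0|\le C\varepsilon$ and, using continuity of $u^\ast$ at $x_0$, forces $\big|\tfrac{x_\varepsilon-y_\varepsilon}{\varepsilon}-\nu(y_\varepsilon)\big|\to 0$. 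Hence $x_\varepsilon=y_\varepsilon+\varepsilon\nu(y_\varepsilon)+o(\varepsilon)$, and the interior-cone condition \eqref{bdry reg} in (A) \emph{automatically} places $x_\varepsilon\in\Omega$ for small $\varepsilon$, with no case split and no $C^2$ barrier needed. The quartic exponent is what makes the derivatives of the $x$-test function $\psi_\varepsilon(x)=\big|\tfrac{x-y_\varepsilon}{\varepsilon}-\nu(y_\varepsilon)\big|^4$ satisfy $\varepsilon|\nabla\psi_\varepsilon(x_\varepsilon)|\to 0$ and $\varepsilon^2|\nabla^2\psi_\varepsilon(x_\varepsilon)|\to 0$, so that (3.1) applies directly and the subsolution inequality yields $u^\ast(x_0)\le 0$. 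This is the missing idea in your boundary case: rather than building a $C^2$ barrier from $\nu$, one uses $\nu$ only to \emph{translate} the test point into the interior via the doubled variable $y$.
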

\begin{proof}
We only show the statement 1) of the proposition. The statement 2) can be proved using a symmetric argument.
 
It is clear that $u^\ast\in USC(\Oba)$ and $u^\ast(x)=u(x)$ for $x \in \Omega$. Let $x_0\in \partial \Omega$.  
We take $B_r(x_0)$ with $r>0$ sufficiently small so that (A) holds for $s=r$. Let $\N_r(x_0)=B_r(x_0)\cap \Omega$.  

Let $\nu$ be the function in $\ol{\N_r(x_0)}$ appearing in \eqref{bdry reg} of (A). 
For $\vep>0$, let 
\[
\Phi_\vep(x, y)=u^\ast(x)-\left|{x-y\over \vep}-\nu(y)\right|^4-\frac{|y-x_0|^2}{\vep^2},  \quad \text{for }x, y\in \ol{\N_r(x_0)}.
\]
By the upper semicontinuity of $\Phi_\vep$ in $\ol{\N_r(x_0)}\times \ol{\N_r(x_0)}$, we see that $\Phi_\vep$ attains a maximum in $\ol{\N_r(x_0)}\times \ol{\N_r(x_0)}$ for $\vep>0$ sufficiently small. Suppose that the maximum is attained at $(x_\vep, y_\vep)\in \ol{\N_r(x_0)}\times \ol{\N_r(x_0)}$. It is clear that
\[
\Phi_\vep(x_\vep, y_\vep)\geq \Phi_\vep(x_0+\vep \nu(x_0), x_0)=u^\ast(x_0+\vep\nu(x_0)),
\]
which implies that 
\begin{equation}\label{singular prop eq1}
\left|{x_\vep-y_\vep\over \vep}-\nu(y_\vep)\right|^4+{|y_\vep-x_0|^2\over \vep^2}\leq u^\ast(x_\vep)-u^\ast(x_0+\vep\nu(x_0)).
\end{equation}
Then, there exists $C>0$ such that $|{x_\vep-y_\vep}|\leq C\vep$ and $|y_\vep-x_0|\leq C\vep$ for all $\vep>0$ small, which further implies
\begin{equation}\label{singular prop eq2}
|x_\vep-x_0|\leq 2C\vep. 
\end{equation}
We thus have $x_\vep, y_\vep\to x_0$ as $\vep\to 0$. 
It then follows from \eqref{singular prop eq1} and the continuity of $u^\ast$ at $x_0$ that  
\[
\limsup_{\vep\to 0}\left|{x_\vep-y_\vep\over \vep}-\nu(y_\vep)\right|^4\leq \limsup_{\vep\to 0}\big(u^\ast(x_\vep)-u^\ast(x_0+\vep\nu(x_0))\big)=0, 
\]
which yields 
\[
\left|{x_\vep-y_\vep\over \vep}-\nu(y_\vep)\right|^4\to 0\quad \text{as $\vep\to 0$.}
\]
In particular, we get $x_\vep-(y_\vep+\vep \nu(y_\vep))=o(\vep)$, and therefore by \eqref{bdry reg} in (A), $x_\vep\in \Omega$ when $\vep>0$ is sufficiently small. 

Noticing that 
\[
\psi_\vep(x):= \left|{x-y_\vep\over \vep}-\nu(y_\vep)\right|^4+{|y_\vep-x_0|^2\over \vep^2}
\]
plays the role of a test function for the subsolution $u$ at $x=x_\vep$, i.e., $u-\psi_\vep$ attains a maximum at $x_\vep$, we apply the definition of subsolutions to get
\begin{equation}\label{singular prop eq3}
u(x_\vep)+F(x_\vep, \nabla \psi_\vep(x_\vep), \nabla^2 \psi_\vep(x_\vep))\leq 0.
\end{equation}
By direct computations, it is easily seen that
\[
\vep |\nabla \psi_\vep(x_\vep)|\to 0, \quad  \vep^2 |\nabla^2 \psi_\vep(x_\vep)|\to 0\quad \text{ as $\vep\to 0$.}
\]
Therefore, combining them with \eqref{singular prop eq2}, we can adopt \eqref{degeneracy sub} to deduce
\[
\liminf_{\vep\to 0}F(x_\vep, \nabla \psi_\vep(x_\vep), \nabla^2 \psi_\vep(x_\vep))\ge 0.
\]
By \eqref{singular prop eq3} and again the continuity of $u^\ast$ at $x_0$, this yields $u^\ast(x_0)\leq 0$. 

Note that the assumption (F3) implies the conditions \eqref{degeneracy sub} and \eqref{degeneracy sup}. Hence, we obtain both properties in 1) and 2) under (F3) for a solution continuous at $x_0 \in \partial \Omega$.
\end{proof}

\begin{rmk}
We do not need the boundedness assumption on $\Omega$. Our proof is based on the existence of a continuous function $\nu$ and $c>0$ satisfying \eqref{bdry reg} for $y\in \Oba$ near $x_0\in \partial \Omega$. 
\end{rmk}

One can use the same arguments of Proposition \ref{prop singular bdry} to show that, for a bounded subsolution $u\in USC(\Omega)$ (resp., supersolution $u\in LSC(\Omega)$) of 
\begin{equation}\label{inhomo dirichlet}    
u(x)+F(x, \nabla u, \nabla^2 u)=f(x)
\end{equation}
with $f\in C(\Oba)$, 
we have 
\[
u^\ast(x_0)\leq f(x_0)\quad \left(\text{resp.,}\quad u_\ast(x_0)\geq f(x_0) \right)
\]
provided that $u^\ast\in USC(\Oba)$ (resp., $u_\ast\in LSC(\Oba)$) is continuous at $x_0\in \partial \Omega$.  

A related result in the setting of parabolic equations is presented in Section \ref{sec:general}.

\section{Liouville-type theorems}

\subsection{Nonexistence of nontrivial bounded solutions in $C(\Oba)$}

Let us now prove that \eqref{nonlinear eq} does not have nontrivial bounded solutions that are continuous up to $\partial \Omega$. It follows from Proposition \ref{prop singular bdry} that any bounded solution that has a continuous extension to $\Oba$ must satisfy the homogeneous Dirichlet boundary condition \eqref{dirichlet}. 

\begin{prop}[Homogeneous Dirichlet condition]\label{prop dirichlet}
Let $\Omega\subset \R^n$ be a domain satisfying (A). Assume that (F1)--(F3) hold. Let $u\in C(\Omega)$ be a bounded solution to \eqref{nonlinear eq} that can be extended to $u\in C(\overline{\Omega})$. Then $u=0$ on $\partial \Omega$. 
\end{prop}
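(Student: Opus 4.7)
The statement is essentially a direct corollary of Proposition \ref{prop singular bdry}, so my plan is short. The key observation is that (F3) is strictly stronger than the pointwise degeneracy conditions \eqref{degeneracy sub} and \eqref{degeneracy sup} needed in Proposition \ref{prop singular bdry}: in (F3) the supremum is taken over all $x\in \Omega$ with $d(x)\leq \delta$, so in particular over all $x$ with $|x-x_0|\leq \delta$ for any fixed $x_0\in\partial\Omega$, since such $x$ automatically satisfy $d(x)\leq |x-x_0|\leq \delta$. Hence both \eqref{degeneracy sub} and \eqref{degeneracy sup} hold at every $x_0\in \partial \Omega$.

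Next, I would fix an arbitrary $x_0\in \partial \Omega$ and use the hypothesis that $u$ extends continuously to $\overline{\Omega}$. This extension is automatically both upper and lower semicontinuous on $\overline{\Omega}$, so the envelopes agree with the extension, i.e.\ $u^\ast(x_0)=u_\ast(x_0)=u(x_0)$, and in particular $u^\ast$ and $u_\ast$ are continuous at $x_0$. Since $u\in C(\Omega)$ is a solution of \eqref{nonlinear eq}, it is simultaneously a bounded subsolution and a bounded supersolution, so both statements of Proposition \ref{prop singular bdry} apply.

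Applying part 1) of Proposition \ref{prop singular bdry} yields $u(x_0)=u^\ast(x_0)\leq 0$, and applying part 2) yields $u(x_0)=u_\ast(x_0)\geq 0$. Combining the two inequalities gives $u(x_0)=0$. As $x_0\in \partial \Omega$ was arbitrary, this establishes $u\equiv 0$ on $\partial \Omega$, proving the proposition.

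There is no substantive obstacle here, since the real work has already been carried out in Proposition \ref{prop singular bdry} (the state-constraint-style test function argument with the inward-normal perturbation $\nu$ from (A)); the only thing to verify carefully is that (F3), phrased in terms of $d(x)\leq \delta$, implies \eqref{degeneracy sub} and \eqref{degeneracy sup}, which are phrased in terms of $|x-x_0|\leq \delta$, and this is immediate from $d(x)\leq |x-x_0|$ when $x_0\in\partial\Omega$.
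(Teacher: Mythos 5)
Your proof is correct and follows exactly the route the paper intends: the authors explicitly omit the proof of Proposition~\ref{prop dirichlet}, noting it is an immediate consequence of Proposition~\ref{prop singular bdry} (whose final paragraph already records that (F3) implies both \eqref{degeneracy sub} and \eqref{degeneracy sup} and that a solution continuous at $x_0\in\partial\Omega$ satisfies $u(x_0)=0$). Your check that $|x-x_0|\leq\delta$ forces $d(x)\leq\delta$ for $x_0\in\partial\Omega$, and that the continuous extension makes $u^\ast=u_\ast=u$ on $\partial\Omega$, is precisely the reasoning being left implicit.
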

We omit the proof of Proposition \ref{prop dirichlet}, since it is an immediate consequence of Proposition~\ref{prop singular bdry}. Let us now prove our main result, Theorem \ref{thm main}.

\begin{proof}[Proof of Theorem \ref{thm main}]
We have shown in Proposition \ref{prop dirichlet} that $u=0$ on $\partial \Omega$ after the extension of boundary value. We now apply the standard argument for the maximum principle to prove $u=0$ in $\Omega$.

Suppose that there exists  $x_0\in \Omega$ such that $u(x_0)>0$. 
For any $\vep>0$ small, define a function $\phi_\vep$ by
\begin{equation}\label{penalty infinity}
\phi_\vep(x)=\vep(|x-x_0|^2+1)^{1\over 2}, \quad x\in \Oba.
\end{equation}
 Since $u$ is bounded and continuous in $\Oba$, for any $\vep>0$, there exists $\hat{x}\in \Oba$ such that
 \begin{equation}\label{u max}
 u(\hat{x}) -\phi_\vep(\hat{x})= \max_{\Oba} (u-\phi_\vep)\geq u(x_0)-\vep,
 \end{equation}
 which implies that $u(\hat{x})\geq u(x_0)$.

Since $u=0$ on $\partial \Omega$, we have $\hat{x}\in \Omega$, which by the definition of viscosity subsolutions, yields
 \begin{equation}\label{buc sub1}
 u(\hat{x}) + F(\hat{x},\nabla \phi_\vep(\hat{x}), \nabla^2\phi_\vep(\hat{x})) \le 0.
 \end{equation}
Noticing that 
\[
|\nabla \phi_\vep(\hat{x})|, |\nabla^2 \phi_\vep(\hat{x})|\to 0\quad \text{as $\vep\to 0$,}
\]
by \eqref{near zero} implied by (F4), we get
\begin{equation}\label{buc sub2}
u(x_0)\leq \limsup_{\vep\to 0}|F(\hat{x},\nabla \phi_\vep(\hat{x}), \nabla^2\phi_\vep(\hat{x}))|=0, 
\end{equation}
which is a contradiction.

Similarly, assuming $u$ attains a negative local minimum also yields a contradiction. Hence, $u\equiv 0$ in $\Omega$. 
\end{proof}
\begin{rmk}
    If $\Omega$ is additionally assumed to be bounded, then we can obtain the same results in Proposition \ref{prop dirichlet} and Theorem \ref{thm main} with \eqref{near zero} in (F4) replaced by the condition that $F(x, 0, 0)=0$ for all $x\in \Omega$. Notice that \eqref{near zero} is only used to prove \eqref{buc sub2}. In a bounded domain $\Omega$, if $u\in C(\Oba)$ attains  at some $\hat{x}\in \Omega$ a  maximum $u(\hat{x})>0$, then any constant can be utilized as a test function in place of $\phi_\vep$ to get from \eqref{buc sub1} a contradiction that $u(\hat{x})\leq 0$. A similar  argument works also for Theorem \ref{thm main} for a bounded domain. 
\end{rmk}

\subsection{Nonexistence of nontrivial bounded solutions in $C(\Omega)$}
Besides the boundedness of $u$ in $\Omega$, the constraint that $u$ can be continuously extended to $\partial \Omega$ plays a key role in our proof of Theorem \ref{thm main}. In this subsection, we attempt to drop this condition and establish the Liouville-type result for continuous solutions in a bounded $C^2$ domain.  
Our main result for this part is Theorem \ref{thm main2}, which follows from Theorem \ref{thm main} and the following regularity result.  

\begin{prop}[Induced H\"older continuity]\label{prop uc}
Let $\Omega\subset \R^n$ be a bounded domain of $C^2$ class. Assume that (F1), (F2) hold. Let $u\in C(\Omega)$ be a bounded viscosity solution to \eqref{nonlinear eq}. 
Assume that (F5) holds and let $a_F>0$ and $\alpha \in (0,1]$ be the constants appearing in (F5) corresponding to $M=\sup_{x\in \Omega}|u(x)|$. 
Then $u$ satisfies 
\begin{equation}\label{uc-omega}
|u(x)-u(y)|\leq a_F|x-y|^\alpha \quad \text{ for all $x, y\in \Omega$}.
\end{equation}
\end{prop}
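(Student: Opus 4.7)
The plan is the classical doubling-of-variables argument of Ishii--Lions, with a penalization tailored to the boundary degeneracy through the distance function $d$. Suppose toward contradiction that \eqref{uc-omega} fails, and set
\[
\sigma := \sup_{(x,y)\in\Omega\times\Omega}\bigl(u(x) - u(y) - a_F|x-y|^\alpha\bigr) > 0.
\]
Since $\Omega$ is bounded and of class $C^2$, $d$ is $C^2$ in a tubular neighborhood of $\partial\Omega$, so by a smooth cutoff and, if needed, an additive constant I can construct $\psi\in C^2(\Omega)$ which agrees with $-\log d(x)$ near $\partial\Omega$, satisfies $\psi\ge 0$ and $\psi(x)\to+\infty$ as $x\to\partial\Omega$, and obeys $|\nabla\psi(x)|\le C_\Omega d(x)^{-1}$ and $\nabla^2\psi(x)\le C_\Omega(d(x)^{-2}+1)I_n$ throughout $\Omega$ for some $C_\Omega$ depending only on $\Omega$. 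The logarithmic choice is dictated exactly by the $\delta d^{-1}$ and $\delta d^{-2}$ bounds appearing in \eqref{a5-1} and \eqref{a5-2}.

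For small $\delta>0$ I would introduce
\[
\Psi_\delta(x,y) := u(x) - u(y) - a_F|x-y|^\alpha - \delta\bigl(\psi(x)+\psi(y)\bigr),\quad (x,y)\in\Omega\times\Omega.
\]
Boundedness of $u$ together with the blow-up of $\psi$ at $\partial\Omega$ guarantees that $\Psi_\delta$ attains a maximum at some interior point $(\hat x,\hat y)\in\Omega\times\Omega$. Comparing $\Psi_\delta(\hat x,\hat y)$ with $\Psi_\delta$ evaluated at a near-optimizer of $\sigma$ yields $\Psi_\delta(\hat x,\hat y)\ge\sigma/2$ for all $\delta$ sufficiently small; since $\psi\ge 0$ this forces
\[
u(\hat x) - u(\hat y) \ge a_F|\hat x-\hat y|^\alpha + \sigma/2,
\]
and in particular $\hat x\ne\hat y$, so the test function $\varphi(x,y):=a_F|x-y|^\alpha + \delta\psi(x)+\delta\psi(y)$ is $C^2$ in a neighborhood of $(\hat x,\hat y)$.

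Applying the Crandall--Ishii lemma to $\varphi$ at $(\hat x,\hat y)$ with parameter $\eta>0$ produces matrices $X,Y\in\S^n$ with $(p,X)\in\overline J^{2,+}u(\hat x)$ and $(q,Y)\in\overline J^{2,-}u(\hat y)$, where $p=\alpha a_F|\hat x-\hat y|^{\alpha-2}(\hat x-\hat y)+\delta\nabla\psi(\hat x)$ and $q$ is the analogous expression with $-\delta\nabla\psi(\hat y)$. The gradient bound on $\psi$ immediately yields \eqref{a5-1}. A direct computation gives $\nabla^2\varphi=\begin{pmatrix} A & -A\\ -A & A\end{pmatrix}+\delta\,\mathrm{diag}\bigl(\nabla^2\psi(\hat x),\nabla^2\psi(\hat y)\bigr)$ with
\[
A := a_F\bigl(\alpha|\hat x-\hat y|^{\alpha-2}I_n + \alpha(\alpha-2)|\hat x-\hat y|^{\alpha-4}(\hat x-\hat y)\otimes(\hat x-\hat y)\bigr),
\]
and since $\alpha\in(0,1]$ makes the rank-one correction negative semidefinite, we get $A\le \alpha a_F|\hat x-\hat y|^{\alpha-2}I_n$, i.e.\ \eqref{uc-mod-rev}. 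Choosing $\eta$ small enough that $\eta(\nabla^2\varphi)^2\le \delta I_{2n}$ (which is possible at the fixed point $(\hat x,\hat y)$) and performing a one-time rescaling of $\delta$ to absorb $C_\Omega$ yields \eqref{a5-2}. Finally, $|F(\hat x,p,X)|\le|u(\hat x)|\le M$ and $|F(\hat y,q,Y)|\le|u(\hat y)|\le M$ follow from the viscosity inequalities, so \eqref{uc-bounds} also holds, and hypothesis (F5) supplies
\[
F(\hat y,q,Y) - F(\hat x,p,X) \le \beta a_F|\hat x-\hat y|^\alpha + L\delta.
\]

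Subtracting the viscosity inequalities $u(\hat x)+F(\hat x,p,X)\le 0$ and $u(\hat y)+F(\hat y,q,Y)\ge 0$ and combining with the preceding display and the lower bound gives
\[
a_F|\hat x-\hat y|^\alpha + \sigma/2 \le \beta a_F|\hat x-\hat y|^\alpha + L\delta,
\]
which, using $\beta<1$, reduces to $\sigma/2\le L\delta$, a contradiction for $\delta<\sigma/(2L)$. I expect the main obstacle to be the bookkeeping required to match the abstract matrix inequality \eqref{a5-2} against the concrete output of the Crandall--Ishii lemma: the order-$\delta/d^2$ contribution from $\nabla^2\psi$, the correction term $\eta(\nabla^2\varphi)^2$, and the fixed constants from $\nabla^2 d$ must all be folded into the single $\delta I_{2n}$ on the right-hand side of \eqref{a5-2}, which forces $\eta$ to be chosen as a function of $\delta$ and the evaluation point, together with a harmless rescaling of $\delta$ to absorb $C_\Omega$; the rest of the argument is then standard.
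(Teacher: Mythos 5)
Your proof is correct and follows essentially the same approach as the paper: doubling of variables with a logarithmic distance penalization, the Crandall--Ishii lemma, verification of the hypotheses \eqref{uc-bounds}, \eqref{a5-1}, \eqref{a5-2}, \eqref{uc-mod-rev} at the doubled maximizer, and an application of (F5). One small remark: by keeping the $\sigma/2$ gap in the final chain of inequalities you obtain the contradiction $\sigma/2\le L\delta$ directly, while the paper drops this gap in \eqref{uc eq8} and instead finishes with a two-case analysis on whether the maximizers converge to an interior or a boundary point; also, the one-sided estimates $F(\hat x,p,X)\le M$ and $F(\hat y,q,Y)\ge -M$ (rather than the stated absolute-value bounds) are what actually follow from the viscosity inequalities, and they are exactly what \eqref{uc-bounds} requires.
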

\begin{proof}
Hereafter we denote by $\tilde{d}: \Oba\to [0, \infty)$ a $C^2$ extension of $d$ in $\Oba$; namely, we assume that $\tilde{d}\in C^2(\Oba)$ and $\tilde{d}=d$ in $\Oba\setminus \Omega_\delta$ for some $\delta>0$, where $\Omega_\delta:=\{x\in \Omega: d(x)\ge \delta\}$. We may also assume that $\tilde{d}\geq \tilde{C}d$ in $\Oba$ for some $\tilde{C}>0$. In light of the $C^2$ regularity of $\Omega$, such an extension can be obtained by appropriately regularizing $d$ in the region away from $\partial \Omega$.

Assume by contradiction that 
 \eqref{uc-omega} fails to hold. Then there exist $x_0, y_0\in \Omega$ such that 
\[
m:=u(x_0)-u(y_0)-a_F|x_0-y_0|^\alpha>0.
\] 
For sufficiently small $\vep>0$ depending on $x_0$, $y_0$, $m$ and $D$, it follows that
\begin{equation}\label{uc eq1}
\sup_{(x, y)\in \Omega\times \Omega}  \Psi_\vep(x, y)\geq \Psi_\vep(x_0, y_0)\geq {m\over 2},
\end{equation}
where 
\begin{equation}\label{uc penalty}
\Psi_\vep(x, y)=u(x)-u(y)-a_F|x-y|^\alpha+\vep\log {\tilde{d}(x)\over D}+\vep \log {\tilde{d}(y)\over D}.
\end{equation}
Here we take ${D}=\max_{\Oba} \tilde{d}$.
Thanks to the penalization near $\partial \Omega$, the supremum of $\Psi_\vep$ can only be attained in $\Omega\times \Omega$. Assume that $(x_\vep, y_\vep)\in \Omega\times \Omega$ is a maximizer. It is clear that  $x_\vep\neq y_\vep$ holds for each $\vep>0$ small, for otherwise $\Psi_\vep(x_\vep, y_\vep)\leq 0$, which is  a contradiction to \eqref{uc eq1}.

Let
\[
w(x, y)= a_F|x-y|^\alpha-\vep\log {\tilde{d}(x)\over D}-\vep \log {\tilde{d}(y)\over D}, \quad x, y\in \Omega. 
\]
We next apply the Crandall-Ishii Lemma \cite{CIL}. For any $\sigma>0$, there exist $X_\vep, Y_\vep\in \S^n$ such that
\[
(p_\vep, X_\vep)\in \ol{J}^{2, +} u(x_\vep), \quad (q_\vep, Y_\vep)\in \ol{J}^{2, -}u(y_\vep)
\]
with
\begin{equation}\label{uc eq2}
p_\vep=\nabla_x w(x_\vep, y_\vep), \quad q_\vep=-\nabla_y w(x_\vep, y_\vep)
\end{equation}
and 
\begin{equation}\label{uc eq3}
\begin{pmatrix}
X_\vep & 0\\
0 & -Y_\vep
\end{pmatrix}
\leq \nabla^2 w(x_\vep, y_\vep)+\sigma (\nabla^2 w(x_\vep, y_\vep))^2.
\end{equation}
Let us take $\sigma>0$ so small that $\sigma \|(\nabla^2 w(x_\vep, y_\vep))^2\|\leq \vep^2$. Setting $\eta_\vep=x_\vep-y_\vep$, by direct computations we see that
\begin{equation}\label{uc eq4}
\begin{aligned}
&\nabla_x w(x_\vep, y_\vep)=\alpha a_F |\eta_\vep|^{\alpha-2}\eta_\vep-\vep{\nabla \tilde{d}(x_\vep)\over \tilde{d}(x_\vep)},\\
&\nabla_y w(x_\vep, y_\vep)=-\alpha a_F |\eta_\vep|^{\alpha-2}\eta_\vep-\vep{\nabla \tilde{d}(y_\vep)\over \tilde{d}(y_\vep)},
\end{aligned}
\end{equation}
and 
\begin{equation}\label{uc eq5}
\begin{aligned}
&\nabla_x^2 w(x_\vep, y_\vep)=A_\vep-\vep {\nabla^2 \tilde{d}(x_\vep)\over \tilde{d}(x_\vep)}+\vep {\nabla \tilde{d}(x_\vep)\otimes \nabla \tilde{d}(x_\vep)\over \tilde{d}(x_\vep)^2}, \\
&\nabla_y^2 w(x_\vep, y_\vep)=A_\vep-\vep {\nabla^2 \tilde{d}(y_\vep)\over \tilde{d}(y_\vep)}+\vep {\nabla \tilde{d}(y_\vep)\otimes \nabla \tilde{d}(y_\vep)\over \tilde{d}(y_\vep)^2}, \\
& \nabla^2_{xy} w(x_\vep, y_\vep)=\nabla^2_{yx} w(x_\vep, y_\vep) =-A_\vep,
\end{aligned}
\end{equation}
where 
\begin{equation}\label{uc eq6}
A_\vep= \alpha(\alpha-2)a_F|\eta_\vep|^{\alpha-4} \eta_\vep\otimes \eta_\vep 
+\alpha a_F |\eta_\vep|^{\alpha-2} I_n.
\end{equation}
Recall that $\tilde{d}\in C^2(\Oba)$ and $\tilde{d}\geq \tilde{C}d$ holds in $\Oba$ for $\tilde{C}>0$. Then \eqref{uc eq2} and \eqref{uc eq4} yield
\begin{equation}\label{eq a5-1}
\left|p_\vep-\alpha a_F |\eta_\vep|^{\alpha-2}\eta_\vep\right|\leq {C_1\vep\over d(x_\vep)}, 
\qquad \left|q_\vep-\alpha a_F |\eta_\vep|^{\alpha-2}\eta_\vep\right|\leq {C_1\vep\over d(y_\vep)}
\end{equation}
for some $C_1>0$ depending on $\tilde{C}$ and a bound for $\nabla \tilde{d}$. Moreover, by \eqref{uc eq3} and \eqref{uc eq5} we get
\begin{equation}\label{eq a5-2}
\begin{pmatrix}
X_\vep & 0\\
0 & -Y_\vep
\end{pmatrix}\leq \begin{pmatrix}
A_\vep & -A_\vep\\
-A_\vep & A_\vep
\end{pmatrix}+ C_2\vep \begin{pmatrix} d(x_\vep)^{-2} I_n & 0\\ 0 & {d(y_\vep)^{-2}} I_n \end{pmatrix}+ \vep^2 I_{2n}. 
\end{equation}
for some $C_2>0$ depending on $\tilde{C}$ and a bound for both $\nabla \tilde{d}$ and $\nabla^2 \tilde{d}$.
In addition, by \eqref{uc eq6}, we have 
\[
\la A_\vep \xi, \xi\ra\leq \alpha a_F |\eta_\vep|^{\alpha-2} |\xi|^2
\]
for all $\xi\in \R^n$, which amounts to saying that 
\begin{equation}\label{eq a5-3}
A_\vep \leq \alpha a_F |\eta_\vep|^{\alpha-2}I_n.
\end{equation} 

In addition, utilizing the definition of viscosity solutions in Definition~\ref{def viscosity} and Remark~\ref{rmk jets}, we have 
\begin{equation}\label{uc-sub}
u(x_\vep)+F(x_\vep, p_\vep, X_\vep)\leq 0,
\end{equation}
\begin{equation}\label{uc-super}
u(y_\vep)+F(y_\vep, q_\vep, Y_\vep)\geq 0,
\end{equation}
which yield \eqref{uc-bounds} due to the bound $M=\sup_\Omega |u|$.

Note that the estimates \eqref{eq a5-1}, \eqref{eq a5-2} and \eqref{eq a5-3} respectively correspond to the conditions \eqref{a5-1}, \eqref{a5-2} and \eqref{uc-mod-rev} in (F5). Let $C=C_1+C_2$.
We then adopt (F5) with $\delta=C\vep$ and $\vep>0$ sufficiently small to deduce that
\begin{equation}\label{uc eq7}
F(y_\vep, q_\vep, Y_\vep)-F(x_\vep, p_\vep, X_\vep)\leq \beta a_F|x_\vep-y_\vep|^\alpha+CL\vep,
\end{equation}
where $\beta \in (0,1)$ and $L>0$ are the constants in (F5). 
From \eqref{uc-sub} and \eqref{uc-super} again, we are thus led to
\[
u(x_\vep)-u(y_\vep)\leq F(y_\vep, q_\vep, Y_\vep)-F(x_\vep, p_\vep, X_\vep).
\]
It follows immediately from \eqref{uc eq7} that 
\[
u(x_\vep)-u(y_\vep)\leq \beta a_F|x_\vep-y_\vep|^\alpha+CL\vep.
\]
In view of \eqref{uc eq1}, we then get 
\begin{equation}\label{uc eq8}
a_F|x_\vep-y_\vep|^\alpha-\vep\log {\tilde{d}(x_\vep)\over D}-\vep \log {\tilde{d}(y_\vep)\over D} \leq \beta a_F|x_\vep-y_\vep|^\alpha+CL\vep
\end{equation}

Since $\tilde{d}(x_\vep), \tilde{d}(y_\vep)\leq D$, we have 
\[
 a_F|x_\vep-y_\vep|^\alpha\leq \beta a_F|x_\vep-y_\vep|^\alpha+CL\vep,
\]
which, because of the condition $\beta \in(0,1)$, yields $|x_\vep-y_\vep|\to 0$ as $\vep\to 0$.

We thus can take a subsequence such that $x_\vep, y_\vep \to \hat{x}$ for some $\hat{x}\in \Oba$. If $\hat{x}\in \Omega$, then by the continuity of $u$, we have  $\Psi_\vep(x_\vep, y_\vep)\to 0$ as $\vep\to 0$, which contradicts \eqref{uc eq1}. It remains to discuss the case that $\hat{x}\in \partial \Omega$. In this case, since 
\[
\tilde{d}(x_\vep)=d(x_\vep)\to 0, \quad \tilde{d}(y_\vep)=d(y_\vep)\to 0, 
\]
by \eqref{uc eq8}, it is easily seen that when $\vep>0$ is sufficiently small,
\[
a_F|x_\vep-y_\vep|^\alpha\leq \beta a_F|x_\vep-y_\vep|^\alpha,
\]
which is a contradiction to the condition $\beta \in(0,1)$ as well as the fact that $x_\vep\neq y_\vep$.
\end{proof}

\section{Generalization for parabolic equations}\label{sec:general}

Our results in the preceding sections can be further generalized for a class of fully nonlinear parabolic equations \eqref{parabolic eq} with a given $T>0$, where $H \in C(\Omega\times \R\times \R^n\times \S^n)$ is an elliptic operator, that is, 
\[
H(x, r, p, X)\leq H(x, r, p, Y) 
\]
for all $x\in \Omega$, $r \in \R$, $p\in \R^n$ and $X,Y\in \S^n$ satisfying $X\geq Y$.

One can define viscosity solutions in this parabolic case in a similar manner as in Definition \ref{def viscosity} and Remark \ref{rmk jets}. We give the precise definition below for the reader's convenience. For notation convenience, hereafter we denote $\Omega_T=\Omega\times (0, T)$.

\begin{defi}\label{def parabolic viscosity}
A locally bounded function $u\in USC(\Omega_T)$ (resp., $LSC(\Omega_T)$) is called a viscosity subsolution (resp., supersolution) of \eqref{parabolic eq} if whenever there exist $(x_0, t_0)\in \Omega_T$ and $\varphi\in C^2(\Omega_T)$ such that $u-\varphi$ attains a local maximum (resp., minimum) at $(x_0, t_0)$, we have 
\[
\varphi_t (x_0, t_0)+H(x_0, u(x_0, t_0), \nabla \varphi(x_0, t_0), \nabla^2 \varphi(x_0, t_0)) \leq 0
\]
\[
\left(\text{resp.,}\ \ \varphi_t (x_0, t_0)+H(x_0, u(x_0, t_0), \nabla \varphi(x_0, t_0), \nabla^2 \varphi(x_0, t_0))\geq 0 \right).
\]
A function $u\in C(\Omega_T)$ is called a viscosity solution of \eqref{parabolic eq} if it is both a viscosity subsolution and a viscosity supersolution of \eqref{parabolic eq}.
\end{defi}

Similarly to the elliptic case pointed out in Remark \ref{rmk jets}, we can also use parabolic semijets to define viscosity solutions of \eqref{parabolic eq}. For every $(x,t)\in \Omega_T$, the parabolic semijet $P^{2, +} u(x, t)$ is defined as the set of all $(a,p,X) \in \R \times \R^n\times \S^n$ satisfying
\[
u(y,s)\leq u(x,t)+a(s-t) +\la p, y-x \ra+ {1\over 2}\la X (y-x), y-x\ra+o(|y-x|^2+|s-t|)
\]
for all $(y,s)$ near $(x,t)$, and $P^{2, -}u(x, t)$ is defined as $-P^{2, +} (-u)(x, t)$. An equivalent definition of viscosity solutions of \eqref{parabolic eq} using the semijets is as follows. 

A function $u\in USC(\Omega_T)$ (resp., $u\in LSC(\Omega_T)$) is a viscosity subsolution (resp., supersolution) of \eqref{parabolic eq} if and only if 
\[
a+H(x_0, u(x_0, t_0), p_0, X_0)\leq 0 \quad \text{(resp., $\geq 0$)}
\]
holds for all $(a_0, p_0, X_0)\in P^{2, +} u(x_0,t_0)$ (resp., $(a_0, p_0, X_0)\in P^{2, -}u(x_0,t_0)$). One can replace $P^{2, \pm} u$ by $\ol{P}^{2, \pm} u$ if $H$ is assumed to be continuous. We refer to \cite{CIL} for more details about these notions and standard results.

Let us provide a parabolic version of Proposition \ref{prop singular bdry}.

\begin{prop}[Implicit boundary condition]\label{prop general bdry}
Let $\Omega\subset \R^n$ be a domain satisfying (A). Let $H\in C(\Omega\times \R\times \R^n\times \S^n)$ be an elliptic operator. Let $T>0$, $x_0\in \partial \Omega$ and $0<t_0<T$. Assume that there exists $G_0\in C(\R)$ depending on $x_0$ such that 
\begin{equation}\label{degeneracy general}
\begin{aligned}
\sup \Big\{|H(x, r, p, X)-G_{0}(r)|:& \  x\in \Omega,  \ |x-x_0|\leq \delta, \ r\in \R,  \\
&  |p|\leq \omega(\delta)/\delta, |X|\leq \omega(\delta)/\delta^2 \Big\}\to  0\quad \text{as $\delta\to 0$}.
\end{aligned}
\end{equation}
If $u\in USC(\Omega_T)$ (resp., $LSC(\Omega_T)$) is a bounded subsolution (resp., supersolution) of \eqref{parabolic eq} 
and $u^\ast$ (resp., $u_\ast$) is continuous at $(x_0, t_0)\in \partial\Omega\times (0, T)$, then $u^\ast\in USC(\Oba\times (0, T))$ (resp., $u_\ast\in LSC(\Oba\times (0, T))$ satisfies 
\[
\begin{aligned}
u_t(x_0, t_0)+G_{0}(u^\ast(x_0, t_0))\leq 0\quad
 \left(\text{resp., } u_t(x_0, t_0)+G_{0}(u_\ast(x_0, t_0))\geq 0 \right)
\end{aligned}
\]
in the sense that 
\[
\begin{aligned}
\varphi_t(x_0, t_0)+G_{0}(u^\ast(x_0, t_0))\leq 0\quad
 \left(\text{resp., } \varphi_t(x_0, t_0)+G_{0}(u_\ast(x_0, t_0))\geq 0 \right)
\end{aligned}
\]
holds whenever there exists $\varphi\in C^2(\Oba\times (0, T))$ such that $u^\ast-\varphi$ (resp., $u_\ast-\varphi$) attains a local maximum (resp., local minimum) in $\Oba_T$ at $(x_0, t_0)$. 
\end{prop}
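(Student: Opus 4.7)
The plan is to adapt the proof of Proposition~\ref{prop singular bdry} to the parabolic setting, carrying out a doubling-of-variables argument in the spatial variable (using the inward translation field $\nu$ from assumption (A) to push test points into $\Omega$) while treating time as a passive variable already resolved by the test function $\varphi$. By symmetry it suffices to handle the subsolution case, so fix $(x_0,t_0)\in \partial \Omega \times (0,T)$ and a test function $\varphi \in C^2(\Oba \times (0,T))$ at which $u^\ast - \varphi$ attains a local maximum.

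First, I would replace $\varphi$ by $\varphi(x,t) + |x-x_0|^4 + (t-t_0)^4$ so that $u^\ast-\varphi$ has a strict local maximum at $(x_0,t_0)$; this modification is harmless since the added term contributes zero to $\varphi_t(x_0,t_0)$. Choose $r>0$ so small that (A) holds on $\ol{\N_r(x_0)}$ and $[t_0-r,t_0+r]\subset (0,T)$, where $\N_r(x_0)=B_r(x_0)\cap\Omega$. For $\vep>0$ small, introduce the penalization
\[
\Phi_\vep(x,t,y) = u^\ast(x,t) - \varphi(y,t) - \left|\frac{x-y}{\vep} - \nu(y)\right|^4 - \frac{|y-x_0|^2}{\vep^2}
\]
and let $(x_\vep,t_\vep,y_\vep)$ denote a maximizer on $\ol{\N_r(x_0)}\times [t_0-r,t_0+r]\times \ol{\N_r(x_0)}$. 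Testing against $\Phi_\vep(x_0+\vep\nu(x_0),t_0,x_0)$ yields a uniform bound on the penalty terms, exactly as in the elliptic case, and strict maximality combined with the continuity of $u^\ast$ at $(x_0,t_0)$ then forces $x_\vep, y_\vep \to x_0$, $t_\vep \to t_0$, and
\[
\left|\frac{x_\vep - y_\vep}{\vep} - \nu(y_\vep)\right|^4 + \frac{|y_\vep-x_0|^2}{\vep^2} \to 0
\]
as $\vep\to 0$. In particular, $x_\vep - (y_\vep + \vep\nu(y_\vep)) = o(\vep)$, so (A) guarantees $x_\vep \in \Omega$ for all sufficiently small $\vep$.

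Next, observing that
\[
\tilde\varphi_\vep(x,t) := \varphi(y_\vep,t) + \left|\frac{x-y_\vep}{\vep} - \nu(y_\vep)\right|^4
\]
is a legitimate $C^2$ test function for $u$ at the interior point $(x_\vep,t_\vep)$, I would apply the subsolution inequality to obtain
\[
\varphi_t(y_\vep,t_\vep) + H\!\left(x_\vep, u(x_\vep,t_\vep), \nabla_x \tilde\varphi_\vep(x_\vep,t_\vep), \nabla_x^2 \tilde\varphi_\vep(x_\vep,t_\vep)\right) \leq 0.
\]
Direct computation gives $\vep|\nabla_x\tilde\varphi_\vep(x_\vep,t_\vep)| \to 0$ and $\vep^2|\nabla_x^2\tilde\varphi_\vep(x_\vep,t_\vep)|\to 0$, and since $|x_\vep - x_0|=O(\vep)$, assumption \eqref{degeneracy general} applied with $\delta$ comparable to $\vep$ allows me to replace $H(x_\vep,u(x_\vep,t_\vep),\cdot,\cdot)$ by $G_{0}(u(x_\vep,t_\vep))$ up to an error vanishing with $\vep$.

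Finally, passing to $\vep\to 0$: since $x_\vep\in\Omega$ we have $u(x_\vep,t_\vep) = u^\ast(x_\vep,t_\vep)$, and the convergence $\Phi_\vep(x_\vep,t_\vep,y_\vep) \to u^\ast(x_0,t_0) - \varphi(x_0,t_0)$ (squeezed from below by the strict-max lower bound and from above by upper semicontinuity) identifies $u(x_\vep,t_\vep) \to u^\ast(x_0,t_0)$. Combining with continuity of $\varphi_t$ at $(x_0, t_0)$ and of $G_{0}$ delivers
\[
\varphi_t(x_0,t_0) + G_{0}(u^\ast(x_0,t_0)) \leq 0,
\]
and reabsorbing the auxiliary penalization (whose $t$-derivative vanishes at $t_0$) proves the stated inequality for the original $\varphi$. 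The main technical hurdle is controlling $t_\vep$, since unlike $x_\vep$ and $y_\vep$ we have no intrinsic $\vep$-scale penalty on $t$; strictifying the maximum by the $(t-t_0)^4$ bump and then establishing the two-sided squeeze on $\Phi_\vep(x_\vep,t_\vep,y_\vep)$ is precisely what forces $t_\vep\to t_0$ and makes the remainder of the argument close.
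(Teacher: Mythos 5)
Your proposal is correct and follows essentially the same path as the paper's proof: strictify the maximum (the paper adds a quadratic bump where you add a quartic, an immaterial choice), double the spatial variable with the same $\nu$-based quartic penalty, extract a convergent subsequence $(x_\vep, y_\vep, t_\vep) \to (x_0, x_0, \hat{t})$, use strict maximality to force $\hat{t} = t_0$, push the test point into $\Omega$ via (A), apply the subsolution inequality, and pass to the limit using \eqref{degeneracy general}. One small remark: once $(x_\vep, t_\vep) \to (x_0, t_0)$ is established, the identification $u(x_\vep, t_\vep) \to u^\ast(x_0, t_0)$ follows immediately from the hypothesis that $u^\ast$ is continuous at $(x_0, t_0)$, so the ``two-sided squeeze'' you describe is not really needed for that step (though it is what proves $\hat t = t_0$).
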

\begin{proof}
The proof is similar that of Proposition \ref{prop singular bdry}, but our argument here involves test functions. For $r>0$, let $\N_r(x_0)=B_r(x_0)\cap \Omega$. 
By adding a proper quadratic function to $\varphi$, we may assume that $u^\ast-\varphi$ takes a strict local maximum at $(x_0,t_0)\in \partial \Omega\times (0, T)$. By the upper semicontinuity of $u^\ast$, for $r>0$ sufficiently small, 
\[
\Phi_\vep(x, y, t)=u^\ast(x, t)-\varphi(y, t)-\left|{x-y\over \vep}-\nu(y)\right|^4 -{|y-x_0|^2\over \vep^2}
\]
attains its maximum in $\ol{\N_r(x_0)}\times \ol{\N_r(x_0)}\times [t_0-r, t_0+r]$ at  a point $(x_\vep, y_\vep, t_\vep)\in \ol{\N_r(x_0)}\times \ol{\N_r(x_0)}\times [t_0-r, t_0+r]$. It follows that 
\[
\Phi_\vep(x_\vep, y_\vep, t_\vep) \geq \Phi_\vep(x_0+\vep \nu(x_0), x_0, t_0),
\]
which yields 
\begin{equation}\label{dyn-bdry eq1}
\left|{x_\vep-y_\vep\over \vep}-\nu(y_\vep)\right|^4+{|y_\vep-x_0|^2\over \vep^2}\leq u^\ast(x_\vep, t_\vep)-\varphi(y_\vep, t_\vep)-u^\ast(x_0+\vep\nu(x_0), t_0) +\varphi(x_0, t_0). 
\end{equation}
Then there exists $C>0$ such that 
\[
|y_\vep-x_0|\leq C\vep,\quad |x_\vep-x_0|\leq C\vep. 
\]
 By taking a subsequence, still indexed by $\vep>0$ for simplicity, we have $x_\vep, y_\vep\to x_0$ and $t_\vep\to \hat{t}$ as $\vep\to 0$ for some $\hat{t}\in [t_0-r, t_0+r]$. By the assumptions that $\varphi\in C^2(\Oba\times (0, T))$,  $u\in USC(\Oba\times (0, T))$ and $u^\ast$ is continuous at $(x_0, t_0)$, we send  $\vep\to 0$ in \eqref{dyn-bdry eq1} to get
\[
u^\ast(x_0, \hat{t})-\varphi(x_0, \hat{t})-u^\ast(x_0, t_0)+\varphi(x_0, t_0)\geq 0. 
\]
Since $(x_0, t_0)$ is the only maximizer of $u^\ast-\varphi$ in $\Oba\times (0, T)$ near $(x_0, t_0)$, we have $\hat{t}=t_0$. It then follows that $(x_\vep, t_\vep)\to (x_0, t_0)$ and $(y_\vep, t_\vep)\to (x_0, t_0)$  as $\vep\to 0$. 

In view of \eqref{dyn-bdry eq1}, we also have 
\[
\left|{x_\vep-y_\vep\over \vep}-\nu(y_\vep)\right|\to 0\quad \text{as $\vep\to 0$,}
\]
which yields $x_\vep-(y_\vep+\vep \nu(y_\vep))=o(\vep)$ for $\vep>0$ sufficiently small. We can use the assumption (A) to deduce that $x_\vep\in \Omega$ for all $\vep>0$ small. Applying the definition of viscosity subsolutions, we obtain 
\[
(\psi_\vep)_{t}(x_\vep, t_\vep)+H(x_\vep, u(x_\vep, t_\vep), \nabla \psi_\vep(x_\vep, t_\vep), \nabla^2 \psi_\vep(x_\vep, t_\vep))\leq 0,
\]
where
\[
\psi_\vep(x, t):= \varphi(y_\vep, t)+\left|{x-y_\vep\over \vep}-\nu(y_\vep)\right|^4+{|y_\vep-x_0|^2\over \vep^2}.
\]
By direct computations, we have 
\[
(\psi_\vep)_t(x_\vep, t_\vep)=\varphi_t(y_\vep, t_\vep) \to \varphi_t(x_0, t_0), 
\]
\[
\vep |\nabla \psi_\vep(x_\vep, t_\vep)|\to 0, \quad  \vep^2 |\nabla^2 \psi_\vep(x_\vep, t_\vep)|\to 0\quad \text{ as $\vep\to 0$.}
\]
Letting $\vep\to 0$ with an application of \eqref{degeneracy general}, we deduce that 
\[
\varphi_t(x_0, t_0)+ G_{0}(u^\ast(x_0, t_0))\leq 0.
\]
The statement for supersolutions can be proved in a symmetric argument.  
\end{proof}

In general, the result of Proposition \ref{prop general bdry} holds only for $(x_0, t_0)\in \partial \Omega\times (0, T)$. If we consider $t_0=0$ or $t_0=T$ for $u\in USC(\overline{\Omega_T})$ and $\varphi\in C^2(\overline{\Omega_T})$ in our proof above, then it may happen that $t_\vep=0$ or $t_\vep=T$ for the maximizer $(x_\vep, y_\vep, t_\vep)$ of $\Phi_\vep$. In this case, we are not able to derive any condition at $(x_0, t_0)$ from the equation in $\Omega_T$.

An immediate consequence of Proposition \ref{prop general bdry} is as follows. 

\begin{cor}[Dynamic boundary condition]\label{cor dyn}
    Let $\Omega\subset \R^n$ be a domain satisfying (A). Let $H\in C(\Omega\times \R\times \R^n\times \S^n)$ be an elliptic operator. Assume that there exists $G\in C(\partial\Omega\times \R)$ such that \eqref{degeneracy general2} holds. 
Let $u\in C(\Omega_T)$ be a solution of \eqref{parabolic eq} that can be extended continuously to $\partial \Omega\times (0, T)$.  Then $u\in C(\Oba\times (0, T))$ satisfies 
\begin{equation}\label{dyn cond}
u_t+G(x, u)=0 \quad \text{on $\partial \Omega\times (0, T)$}    
\end{equation}
in the following sense.
\begin{enumerate}
    \item[1)] Whenever there exist $\varphi\in C^2(\Oba\times (0, T))$ and $(x_0, t_0)\in \partial \Omega\times (0, T)$ such that $u-\varphi$ attains a local maximum in $\Oba\times (0, T)$ at $(x_0, t_0)$, there holds
    \[
    \varphi_t(x_0, t_0)+G(x_0, u(x_0, t_0))\leq 0.  
    \]
    \item[2)] Whenever there exist $\varphi\in C^2(\Oba\times (0, T))$ and $(x_0, t_0)\in \partial \Omega\times (0, T)$ such that $u-\varphi$ attains a local minimum in $\Oba\times (0, T)$ at $(x_0, t_0)$, there holds
    \[
    \varphi_t(x_0, t_0)+G(x_0, u(x_0, t_0))\geq 0.  
    \]
\end{enumerate}
\end{cor}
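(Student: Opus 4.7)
The corollary is obtained by applying Proposition~\ref{prop general bdry} at each point of the lateral boundary, with the slice $G_0(r) = G(x_0, r)$ playing the role of the point-dependent limit function. The uniform degeneracy hypothesis \eqref{degeneracy general2} is precisely what makes this reduction possible.

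First I would fix $(x_0, t_0) \in \partial\Omega \times (0, T)$ arbitrarily and set $G_0(r) := G(x_0, r)$, which lies in $C(\R)$ since $G \in C(\partial\Omega \times \R)$. The key verification is that \eqref{degeneracy general2} implies \eqref{degeneracy general} at $x_0$ with this choice of $G_0$: restricting the supremum in \eqref{degeneracy general2} to the single admissible choice $y = x_0 \in \partial\Omega$ (so that the constraint $|x-y| \leq \delta$ becomes $|x-x_0| \leq \delta$) yields exactly the quantity appearing in \eqref{degeneracy general}, which therefore tends to zero as $\delta \to 0$.

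Next, since $u$ extends continuously across the lateral boundary, we have $u \in C(\Oba \times (0, T))$ in a neighborhood of $(x_0, t_0)$; in particular $u$ is locally bounded there and $u^\ast(x_0, t_0) = u_\ast(x_0, t_0) = u(x_0, t_0)$. Proposition~\ref{prop general bdry} then applies to $u$ as a bounded subsolution of \eqref{parabolic eq} in a neighborhood (local arguments suffice since the statements in 1) and 2) are purely local), producing
\[
\varphi_t(x_0, t_0) + G(x_0, u(x_0, t_0)) \leq 0
\]
whenever $\varphi \in C^2(\Oba \times (0, T))$ and $u - \varphi$ has a local maximum at $(x_0, t_0)$ in $\Oba \times (0, T)$, which is exactly 1). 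Statement 2) follows by the symmetric application to $u$ viewed as a supersolution.

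There is no substantial obstacle once Proposition~\ref{prop general bdry} is in hand: the uniform structure of \eqref{degeneracy general2}, with $y$ ranging over $\partial\Omega$, is engineered so that fixing $y = x_0$ collapses it to the pointwise hypothesis. The only mild subtlety is that the corollary is stated for $u \in C(\Omega_T)$ without an a priori global bound, but the continuous extension to $\partial\Omega \times (0, T)$ guarantees local boundedness near any lateral boundary point, which is all the penalization argument in the proof of Proposition~\ref{prop general bdry} actually requires.
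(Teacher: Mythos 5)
Your proof is correct and takes essentially the same route as the paper: fix $(x_0,t_0)$ on the lateral boundary, set $G_0 = G(x_0,\cdot)$, observe that \eqref{degeneracy general2} dominates \eqref{degeneracy general} by restricting the supremum to $y=x_0$, and invoke Proposition~\ref{prop general bdry} for each of the sub- and supersolution inequalities. Your remark that only local boundedness of $u$ near the lateral boundary is needed (which the continuous extension supplies) is a useful clarification of a point the paper's one-line proof leaves implicit.
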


To prove this, we simply apply Proposition \ref{prop general bdry} with $G_0=G(x_0, \cdot\ )$ for the local maximizer or minimizer $(x_0, t_0)\in \partial \Omega\times (0, T)$ of $u-\varphi$, noting that \eqref{degeneracy general2} implies \eqref{degeneracy general}. The details are omitted here.

Corollary \ref{cor dyn} states that, under a strong uniform convergence  of $H(x, r, p, X)$ to $G(x, r)$ as $d(x)\to 0$,  $u$ actually satisfies the dynamic boundary condition \eqref{dyn cond} 
in the viscosity sense, which again arises from the degeneracy of the operator rather than being imposed explicitly as in the classical viscosity solution theory.

  In view of Proposition~\ref{prop general bdry}, we can generalize the results of Proposition~\ref{prop dirichlet} and Theorem~\ref{thm main} to a more general class of equations. 

\begin{thm}[Parabolic Liouville-type theorem for bounded solutions in $C(\Oba\times [0, T))$]\label{thm liouville general}
Let $\Omega\subset \R^n$ be a domain satisfying (A). Let $H\in C(\Omega\times \R\times \R^n\times \S^n)$ be an elliptic operator satisfying \eqref{degeneracy general2} for $G\in C(\partial\Omega\times \R)$. Assume that $r\mapsto H(x, r, p, X)$ is nondecreasing for all $x\in \Omega$, $p\in \R^n, X\in \S^n$.  
Assume in addition that 
\begin{equation}\label{general op0}
    \sup_{x\in \Omega} |H(x, 0, p, X)|\to 0  \quad\text{as $|p|, |X|\to 0$.}    
\end{equation}
If $u\in C(\Omega\times [0, T))$ is a bounded viscosity solution to \eqref{parabolic eq} that can be extended to a function in $C(\Oba\times [0, T))$ with $u(\cdot, 0)=0$ in $\Oba$, then $u\equiv 0$ in $\Omega\times [0, T)$. \end{thm}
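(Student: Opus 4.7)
The plan is to mimic the elliptic argument of Theorem~\ref{thm main}, using Corollary~\ref{cor dyn} in place of Proposition~\ref{prop singular bdry} and introducing a temporal penalty that keeps the extremum away from the terminal time $T$.

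First I would record some structural consequences of the hypotheses. By Corollary~\ref{cor dyn}, the extended solution $u \in C(\Oba \times [0, T))$ satisfies the dynamic boundary condition $u_t + G(x, u) = 0$ on $\partial \Omega \times (0, T)$ in the viscosity sense. Next, setting $r = 0$ in \eqref{degeneracy general2} and combining with \eqref{general op0} via a fast-decaying choice of modulus such as $\omega(\delta) = \delta^3$ (so that both $\omega(\delta)/\delta$ and $\omega(\delta)/\delta^2$ tend to $0$), one obtains $G(y, 0) = 0$ for every $y \in \partial \Omega$. Similarly, the monotonicity of $H(x, \cdot, p, X)$ passes to $G$ via the uniform convergence in \eqref{degeneracy general2}, so $r \mapsto G(y, r)$ is nondecreasing, which in particular yields $G(y, r) \geq 0$ whenever $r \geq 0$ and $G(y, r) \leq 0$ whenever $r \leq 0$.

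Assume toward contradiction that $\sup_{\Oba \times [0,T)} u > 0$. For small $\eta, \vep > 0$ I would consider
\begin{equation*}
\Psi_{\eta, \vep}(x, t) = u(x, t) - \vep \sqrt{|x|^2 + 1} - \frac{\eta}{T - t}, \qquad (x, t) \in \Oba \times [0, T).
\end{equation*}
The spatial penalty handles possibly unbounded $\Omega$, while the temporal penalty prevents the supremum from escaping to $t = T$. Since $u(\cdot, 0) \equiv 0$ makes $\Psi_{\eta,\vep}(\cdot, 0) < 0$ whereas $\sup \Psi_{\eta,\vep} > 0$ for $\eta, \vep$ small, any maximizer $(\hat x, \hat t)$ must lie in $\Oba \times (0, T)$ with $u(\hat x, \hat t) > 0$. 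If $\hat x \in \Omega$, the viscosity subsolution inequality tested against $\varphi(x, t) = \vep \sqrt{|x|^2+1} + \eta/(T-t)$ yields
\begin{equation*}
\frac{\eta}{(T-\hat t)^2} + H\bigl(\hat x, u(\hat x, \hat t), \nabla \varphi(\hat x, \hat t), \nabla^2 \varphi(\hat x, \hat t)\bigr) \leq 0;
\end{equation*}
monotonicity in $r$ (using $u(\hat x, \hat t) > 0$) reduces the second argument to $0$, and then \eqref{general op0} together with the uniform bound $|\nabla \varphi|, |\nabla^2 \varphi| = O(\vep)$ in $\hat x$ lets me send $\vep \to 0$ to conclude $\eta/T^2 \leq 0$, a contradiction. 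If $\hat x \in \partial \Omega$, the dynamic boundary condition from the previous paragraph gives
\begin{equation*}
\frac{\eta}{(T-\hat t)^2} + G(\hat x, u(\hat x, \hat t)) \leq 0,
\end{equation*}
and $G(\hat x, u(\hat x, \hat t)) \geq 0$ yields the same contradiction.

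The case $\inf_{\Oba \times [0,T)} u < 0$ is treated symmetrically by examining the minimizer of $u + \vep\sqrt{|x|^2+1} + \eta/(T-t)$ and invoking the viscosity supersolution tests together with $G(y, r) \leq 0$ for $r \leq 0$. The main technical point to verify carefully is the derivation of $G(\cdot, 0) = 0$ and the monotonicity of $G$ from the corresponding properties of $H$: both rely on a delicate coordination between the rate at which $|p|, |X| \to 0$ permitted by \eqref{degeneracy general2} and the smallness built into \eqref{general op0}, and this coordination is precisely what allows the interior and boundary cases to close in a uniform manner.
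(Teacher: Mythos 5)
Your proposal is correct and follows essentially the same path as the paper's own proof: a sup (or inf) of the extended solution penalized in space by $\vep\sqrt{|x|^2+1}$-type functions and in time by $\eta/(T-t)$, a check that the maximizer stays away from $t=0$ and $t=T$, and then a dichotomy between interior and boundary maximizers closed by \eqref{general op0} and by the implicit dynamic boundary condition respectively. The only cosmetic differences are that the paper applies Proposition~\ref{prop general bdry} directly rather than Corollary~\ref{cor dyn}, centers the spatial penalty at the point $x_0$ where $u>0$ (which streamlines the positivity of the max), and handles the sign of $G$ inline via $G(x_\vep, u(x_\vep,t_\vep)) = \lim_{\Omega\ni y\to x_\vep} H(y, u(x_\vep,t_\vep),0,0) \geq \lim H(y,0,0,0)=0$ rather than pre-deriving $G(\cdot,0)=0$ and the monotonicity of $G$; your worry about a ``delicate coordination'' of moduli is actually unnecessary since one can simply take $p=X=0$ in \eqref{degeneracy general2}.
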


\begin{proof}
 The proof, analogous to that of Theorem~\ref{thm main}, follows the standard arguments for maximum principle. Suppose that $u\in C(\Oba\times [0, T))$ takes a positive value at $(x_0, t_0)$. Then for any small $\sigma>0$ fixed, 
\begin{equation}\label{time penalty}
(x, t)\mapsto u(x, t)-\phi_\vep(x)-{\sigma \over T-t}
\end{equation}
attains a positive maximum at $(x_\vep, t_\vep)\in \Oba\times (0, T)$ for any $\vep>0$ small, where $\phi_\vep$ is taken as in \eqref{penalty infinity}. Indeed, we have 
\[
u(x_0, t_0)-\phi_\vep(x_0)-{\sigma\over T-t_0}>0
\]
for sufficiently small $\vep, \sigma>0$ depending on $x_0$ and $t_0$. Fix this $\sigma$. Since $\phi_\vep\geq 0$, $u$ is bounded in $\Omega_T$, and $u(\cdot, 0)=0$ in $\Oba$, we see that 
\[
\sup_{x\in \Oba} \left(u(x, t)-\phi_\vep(x)-{\sigma \over T-t}\right)\leq 0 
\]
when $t=0$ or $t\geq T-\sigma/\|u\|_{\infty}$. In addition, we have $\phi_\vep(x)\to \infty$ as $|x|\to \infty$. Hence, the function as in \eqref{time penalty} has a maximizer $(x_\vep, t_\vep)\in \Oba\times (0, T)$.

We can further show that $x_\vep\notin \partial \Omega$, for otherwise we use Proposition \ref{prop general bdry} to get 
\[
{\sigma\over (T-t_\vep)^2}+G(x_\vep, u(x_\vep, t_\vep))\leq 0, 
\]
which is a contradiction, since 
\[
G(x_\vep, u(x_\vep, t_\vep))=\lim_{\Omega\ni y\to x_\vep} H(y, u(x_\vep, t_\vep), 0, 0)\geq \lim_{\Omega\ni y\to x_\vep} H(y, 0, 0, 0)=0
\]
holds due to \eqref{degeneracy general2} and the monotonicity of $r\mapsto H(x, r, p, X)$.
For $\vep>0$ small, the maximum cannot occur at $x_\vep\in \Omega$ either. Notice that an interior maximum yields
\[
{\sigma \over (T-t_\vep)^2}+H(x_\vep, u(x_\vep, t_\vep), \nabla \phi_\vep(x_\vep), \nabla^2 \phi_\vep(x_\vep))\leq 0,
\]
which, by \eqref{general op0} and the monotonicity of $r\mapsto H(x, r, p, X)$,  implies a contradiction again if $\vep> 0$ is taken small. One can similarly show that $u$ does not take negative values. Hence, $u\equiv 0$ in $\Omega_T$.
\end{proof}

It is also possible to extend the arguments for Proposition~\ref{prop uc} and Theorem \ref{thm main2} to parabolic equations \eqref{parabolic eq} when 
\[
H(x, r, p, X)=r+F(x, p, X), \quad (x, r, p, X)\in \Omega\times \R\times \R^n\times \S^n.
\]
To this end, we need to strengthen (F5) by asking constants $a_F, L, \alpha, \beta$ to be independent of $M>0$ and impose the following additional assumption. 
\begin{enumerate}
\item[(F6)] There exist $b_F>0$ and $\ell>0$ such that
\begin{equation}\label{parabolic uc-cond}
F(y, q, Y) -  F(x, p, X) \leq  b_F c |x-y|^2+\ell\delta
\end{equation}
holds for any $c,\delta>0$, $x, y\in \Omega$, $p, q\in \R^n$ and $X, Y\in \S^n$ satisfying
\begin{equation}\label{parabolic1}
\left|p -c (x-y)\right| \leq {\delta\over d(x)},  
\qquad
\left|q -c (x-y)\right| \leq {\delta\over d(y)}, 
\end{equation}
and 
\begin{equation}\label{parabolic2}
 \begin{pmatrix} X & 0\\ 0& -Y\end{pmatrix}\leq c\begin{pmatrix} I_n & -I_n \\ -I_n & I_n\end{pmatrix}+\delta \begin{pmatrix} {d(x)^{-2}} I_n & 0\\ 0 & {d(y)^{-2}} I_n \end{pmatrix}+ \delta I_{2n}.
 \end{equation}
\end{enumerate}
Heuristically speaking, we shall use the strengthened (F5) to obtain the H\"older regularity of solutions in space as in the elliptic case while (F6) is for the regularity in time. 

We present parabolic variant of Theorem~\ref{thm main2} as follows. 
 \begin{thm}[Parabolic Liouville-type theorem for bounded solutions in $C(\Omega\times [0, T))$]\label{thm main3}
Let $\Omega\subset \R^n$ be a bounded domain of $C^2$ class. Assume that $F$ satisfies (F1)--(F4), (F6) and (F5) with $a_F, L>0$, $\alpha \in (0,1]$ and $\beta \in (0,1)$ independent of $M>0$.
If $u\in C(\Omega\times [0, T))$ is a bounded viscosity solution to \eqref{parabolic eq} satisfying initial condition $u(\cdot, 0)=0$ in $\Omega$ and there exists a modulus of continuity $\omega_0: [0, \infty)\to [0, \infty)$, strictly increasing and smooth in $(0, \infty)$ such that $\sup_{x\in \Omega} |u(x,t)| \le \omega_0 (t)$ for $t\in [0,T)$, then $u\equiv 0$ in $\Omega_T$. 
\end{thm}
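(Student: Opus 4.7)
The plan is to mirror the strategy of Theorem \ref{thm main2} in the parabolic setting: first establish sufficient quantitative regularity of $u$ to extend it continuously to $\Oba\times[0,T)$ with zero initial data, then invoke Theorem \ref{thm liouville general} to conclude $u\equiv 0$. The main engine is a parabolic analog of Proposition \ref{prop uc} powered by the strengthened assumption (F5), with (F6) used in parallel for a temporal modulus.

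The first and crucial step is the global spatial H\"older estimate
\[
|u(x,t)-u(y,t)|\le a_F|x-y|^\alpha \quad \text{for all } x,y\in\Omega,\ t\in[0,T).
\]
I would adapt the doubling argument of Proposition \ref{prop uc} by introducing, with $\tilde d$ and $D$ as in that proof,
\[
\Psi_\vep(x,y,t)=u(x,t)-u(y,t)-a_F|x-y|^\alpha+\vep\log\frac{\tilde d(x)}{D}+\vep\log\frac{\tilde d(y)}{D}-\frac{\sigma}{T-t}
\]
on $\Omega\times\Omega\times(0,T)$. If the estimate failed at some $(x_0,y_0,t_0)\in\Omega\times\Omega\times(0,T)$ with value $m>0$, then for $\vep,\sigma>0$ small one has $\sup\Psi_\vep\ge m/2$. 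The hypothesis $|u(x,t)|\le\omega_0(t)$ with $\omega_0(0)=0$ rules out $t=0$, the log penalty keeps $(x,y)$ in the interior, and $-\sigma/(T-t)$ rules out $t\to T$, so a maximizer $(x_\vep,y_\vep,t_\vep)$ exists with $x_\vep\neq y_\vep$. I would then invoke the parabolic Crandall-Ishii lemma (via an auxiliary $(t-s)^2/\eta$ doubling in time, letting $\eta\to 0$) to extract semijets whose spatial components obey the bounds \eqref{a5-1}, \eqref{a5-2}, \eqref{uc-mod-rev} with $\delta=C\vep$ and whose time components differ by exactly $\sigma/(T-t_\vep)^2>0$. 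Subtracting the viscosity sub- and supersolution inequalities for $u_t+u+F=0$ and applying the $M$-independent (F5) yields
\[
\frac{\sigma}{(T-t_\vep)^2}+\big(u(x_\vep,t_\vep)-u(y_\vep,t_\vep)\big)\le\beta a_F|x_\vep-y_\vep|^\alpha+CL\vep,
\]
which combined with $\Psi_\vep(x_\vep,y_\vep,t_\vep)\ge m/2$ forces $(1-\beta)a_F|x_\vep-y_\vep|^\alpha+m/2\le CL\vep$, an absurdity as $\vep\to 0$. In parallel, a doubling-in-time argument using (F6) with its quadratic penalty $c|x-y|^2$ produces a uniform-in-$x$ temporal modulus of continuity for $u$, which reinforces the extension step below.

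Next, the uniform-in-$t$ spatial H\"older estimate lets $u(\cdot,t)$ extend to $\bar u(\cdot,t)\in C(\Oba)$ for each $t$, and the standard triangle inequality
\[
|\bar u(x_n,t_n)-\bar u(x_0,t_0)|\le a_F|x_n-y|^\alpha+|u(y,t_n)-u(y,t_0)|+a_F|y-x_0|^\alpha
\]
with a fixed interior reference $y\in\Omega$ near $x_0$ upgrades this to $\bar u\in C(\Oba\times[0,T))$, using only pointwise time continuity of $u$ at $y$ (available since $u\in C(\Omega\times[0,T))$). The bound $|u(x,t)|\le\omega_0(t)$ with $\omega_0(0)=0$ passes to the extension, giving $\bar u(\cdot,0)\equiv 0$ on $\Oba$. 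Finally, for $H(x,r,p,X)=r+F(x,p,X)$, continuity and ellipticity follow from (F1)--(F2), monotonicity in $r$ is automatic, (F3) yields \eqref{degeneracy general2} with $G(y,r)=r$, and (F4) yields \eqref{general op0}, so Theorem \ref{thm liouville general} applies and delivers $u\equiv 0$ in $\Omega\times[0,T)$.

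The hard part will be the careful execution of the parabolic Crandall-Ishii extraction in the presence of both the boundary-singular logarithm and the $T$-singular time penalty; the key saving observation is that the resulting time-derivative gap $\sigma/(T-t_\vep)^2$ appears with a favorable sign on the subsolution side, so after cancellation the contradiction reduces to the elliptic one. A secondary subtlety is that $M=\sup_{\Omega\times[0,T)}|u|\le\lim_{t\to T^-}\omega_0(t)$ depends on $T$ and $\omega_0$, which is precisely why the $M$-independence of the constants $a_F,L,\alpha,\beta$ in (F5) is built into the hypotheses of Theorem~\ref{thm main3}.
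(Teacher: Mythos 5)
Your proposal is correct and follows the same overall strategy as the paper's proof: prove a global spatial H\"older bound via a doubling argument with the logarithmic boundary penalty and a $1/(T-t)$ time penalty, use (F5) with $M$-independent constants plus the parabolic Crandall--Ishii lemma to close the contradiction (dropping the favorably-signed $\sigma/(T-t_\vep)^2$ term, exactly the ``key saving observation'' you describe), then extend $u$ to $\Oba\times[0,T)$ and invoke Theorem~\ref{thm liouville general}. The interesting point of divergence is in how you organize the regularity step. The paper runs two estimates: (F6) is used to prove a uniform temporal modulus $u(x,t)-u(x,s)\le\omega_0(|t-s|)$ (and that step actually \emph{uses} the spatial estimate, so the paper's presentation order is the reverse of its logical dependency), and (F5) is used for the spatial H\"older estimate; both are then ``combined'' for the extension. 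You instead observe that the spatial H\"older estimate together with the interior continuity $u\in C(\Omega\times[0,T))$ already gives $\bar u\in C(\Oba\times[0,T))$ by the standard triangle-inequality argument with a nearby interior reference point — making the temporal-modulus step, and hence (F6), logically dispensable for this conclusion. That observation is correct: one only needs a joint-continuous extension, not uniform continuity in $(x,t)$, so the triangle inequality $a_F|x_n-y|^\alpha+|u(y,t_n)-u(y,t_0)|+a_F|y-x_0|^\alpha$ with $y\in\Omega$ close to $x_0$ suffices, and the extended initial condition $\bar u(\cdot,0)=0$ on $\Oba$ follows from $u(\cdot,0)=0$ in $\Omega$ plus continuity (the $\omega_0$-bound is not even needed there). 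Your aside about deriving the parabolic Crandall--Ishii estimate via a $(t-s)^2/\eta$ time-doubling is a valid but unnecessary detour, since the paper simply cites Theorem 8.3 of \cite{CIL} directly. All in all your route is slightly leaner than the paper's, though you hedge by still mentioning the (F6)-based temporal modulus ``in parallel'' rather than noting outright that it can be dropped.
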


\begin{proof}
The proof is divided into two steps. In the first step, we show that
\begin{equation}\label{time regularity}
u(x,t)-u(x,s) \le \omega_0 (|t-s|)
\end{equation}
for all $s, t\in[0,T)$ and $x \in \Omega$.
This inequality \eqref{time regularity} holds when $t=0$ or $s=0$ because of the assumption $\sup_{x\in \Omega} |u(x,t)| \le \omega_0 (t)$ and the homogeneous initial condition.
Assume that there exist $x_0\in \Omega$ and $t_0,s_0\in (0,T)$ such that
\begin{equation*}
\mu:= u(x_0,t_0) - u(x_0,s_0) -\omega_0(|t_0-s_0|) >0.
\end{equation*}
It follows that for $\vep>0$ small, 
\begin{equation}\label{positivity1}
\sup_{\Omega \times [0,T) \times [0,T)} \Phi_\vep(x, t, s)\geq {\mu \over 2}>0,
\end{equation}
where 
\begin{align*}
\Phi_\vep(x, t, s)=u(x, t)-u(x, s)-\omega_0(|t-s|)+2\vep\log {\tilde{d}(x)\over D}-{\vep \over T-t}-{\vep \over T-s}.
\end{align*}
Let $(x_\vep, t_\vep, s_\vep)$ be a maximizer of $\Phi_\vep$ on $\Omega\times [0,T) \times [0,T)$.
Here, for a fixed $\vep>0$ (to be determined later) we also consider
\begin{align*}
\Phi_{\vep, \delta}(x, y, t, s)=u(x, t)-u(y, s)-& \frac{|x-y|^2}{\delta}-\omega_0(|t-s|)\\
& +\vep\log {\tilde{d}(x)\over D}+ \vep\log {\tilde{d}(y)\over D}-{\vep \over T-t}-{\vep \over T-s}
\end{align*}
and denote by $(x_{\vep, \delta},\, y_{\vep, \delta},\, t_{\vep, \delta},\, s_{\vep, \delta})$ a maximizer of $\Phi_{\vep, \delta}$ on $\Omega\times \Omega \times [0,T) \times [0,T)$. Note that $t_{\vep, \delta}, s_{\vep, \delta}>0$ due to the assumption $\sup_{x\in \Omega} |u(x,t)| \le \omega_0 (t)$.

Since \eqref{positivity1} implies $\Phi_{\vep, \delta}(x_{\vep, \delta},\, y_{\vep, \delta},\, t_{\vep, \delta},\, s_{\vep, \delta})>0$ and the solution $u$ is bounded, we have
\begin{equation}\label{delta_convergence}
|x_{\vep, \delta} - y_{\vep, \delta}|\to 0 \quad\text{as $\delta \to 0$} 
\end{equation}
uniformly for $\vep>0$. We also have $t_{\vep, \delta}\neq s_{\vep, \delta}$ for any $\delta$ small. 
In fact, in view of \eqref{space regularity}, assuming $t_{\vep, \delta}= s_{\vep, \delta}$ yields 
\begin{align*}
\frac{\mu}{2} 
 \le \Phi_{\vep, \delta}(x_{\vep, \delta}, y_{\vep, \delta}, t_{\vep, \delta}, t_{\vep, \delta})\le u(x_{\vep, \delta}, t_{\vep, \delta})-u(y_{\vep, \delta}, t_{\vep, \delta}) \le a_F|x_{\vep, \delta}-y_{\vep, \delta}|^\alpha, 
\end{align*}
which is a contradiction to \eqref{delta_convergence}. 
We therefore deduce $t_{\vep, \delta}\neq s_{\vep, \delta}$ when $\delta>0$ is taken small. 

By taking a subsequence of $\delta$ for each $\vep$, we can obtain stronger convergence than \eqref{delta_convergence}. From the boundedness of  $\Omega$, for each fixed $\vep$ there exist $\hat{x}_\vep \in \Oba$, $\hat{t}_\vep, \hat{s}_\vep \in [0, T]$ and a subsequence such that 
$x_{\vep, \delta}, y_{\vep, \delta}\to \hat{x}_\vep$, $t_{\vep, \delta} \to\hat{t}_\vep$, $s_{\vep, \delta} \to \hat{s}_\vep$ as $\delta \to 0$. Since 
\begin{equation}\label{Phi_max}
\Phi_{\vep, \delta}(x_{\vep, \delta}, y_{\vep, \delta}, t_{\vep, \delta}, s_{\vep, \delta})
\ge \Phi_{\vep}(x_{\vep}, t_{\vep}, s_{\vep})>0, 
\end{equation}
we have $\hat{x}_\vep\in \Omega$ and $\hat{t}_\vep, \hat{s}_\vep \in (0, T)$. By further taking a subsequence if necessary, we may also assume that 
\[
\frac{|x_{\vep, \delta}-y_{\vep, \delta}|^2}{\delta}\to c_\vep\quad \text{as $\delta\to 0$}
\]
for some $c_\vep\geq 0$. Letting $\delta \to 0$ in \eqref{Phi_max}, we see that 
\[
\Phi_\vep(\hat{x}_\vep, \hat{t}_\vep, \hat{s}_\vep)-c_\vep\geq \Phi_\vep(x_\vep, t_\vep, s_\vep).
\]
Since $(x_\vep, t_\vep, s_\vep)$ is a maximizer of $\Phi_\vep$, it follows that $c_\vep=0$, that is, 
\begin{equation}\label{delta_convergence2}
\frac{|x_{\vep, \delta}-y_{\vep, \delta}|^2}{\delta}\to 0 \quad \text{as $\delta\to 0$. }    
\end{equation}

The rest of the proof is similar to the arguments for Proposition~\ref{prop uc}. We apply the parabolic Crandall-Ishii Lemma \cite[Theorem 8.3]{CIL} to obtain, for any $\sigma>0$ small enough, $(\tilde{h}, \tilde{p}, \tilde{X})\in \ol{P}^{2, +} u(x_{\vep, \delta}, t_{\vep, \delta})$  and $(\tilde{k}, \tilde{q}, \tilde{Y})\in \ol{P}^{2, -} u(y_{\vep, \delta}, s_{\vep, \delta})$ such that
\begin{equation}\label{parabolic-t}
\tilde{h}-\tilde{k}=
{\vep\over (T-t_{\vep, \delta})^2}+{\vep\over (T-s_{\vep, \delta})^2},
\end{equation}
\[
\tilde{p}-{2\over \delta}(x_{\vep, \delta}- y_{\vep, \delta})=-{\vep\over \tilde{d}(x_{\vep, \delta})}\nabla \tilde{d}(x_{\vep, \delta}), \quad \tilde{q}-{2\over \delta}(x_{\vep, \delta}- y_{\vep, \delta})={\vep\over \tilde{d}(y_{\vep, \delta})}\nabla \tilde{d}(y_{\vep, \delta}), 
\]
\[
 \begin{pmatrix} \tilde{X} & 0\\ 0& -\tilde{Y}\end{pmatrix}\leq {2\over \delta}\begin{pmatrix} I_n & -I_n \\ -I_n & I_n\end{pmatrix}+C\vep\begin{pmatrix} {d(x_{\vep, \delta})^{-2}} I_n & 0\\ 0 & {d(y_{\vep, \delta})^{-2}} I_n \end{pmatrix}+ \vep^2 I_{2n}
\]
for some $C>0$. The definitions of parabolic semijets $P^{2, \pm} u$ and their closures $\ol{P}^{2, \pm}u$ of a function $u$ are omitted here but can also be found in \cite{CIL}.  

Adopting (F6), we are led to
\begin{equation}\label{parabolic-x}
F(y_{\vep, \delta}, \tilde{q}, \tilde{Y})-F(x_{\vep, \delta}, \tilde{p}, \tilde{X})\leq {2b_F\over \delta} |x_{\vep, \delta}-y_{\vep, \delta}|^2+C\ell \vep
\end{equation}
for some constant $C>0$. Applying the definition of viscosity sub- and supersolutions to $u$ together with \eqref{parabolic-t}\eqref{parabolic-x}, we have
\begin{equation}\label{parabolic-contradiction}
u(x_{\vep, \delta}, t_{\vep, \delta})- u(y_{\vep, \delta}, s_{\vep, \delta}) 
\le \frac{2b_F}{\delta} |x_{\vep, \delta}-y_{\vep, \delta}|^2 +C\ell \vep
\end{equation}
for some $C>0$. Here, since
$\Phi_{\vep, \delta}(x_{\vep, \delta}, y_{\vep, \delta}, t_{\vep, \delta}, s_{\vep, \delta}) \ge \mu /2$, we see that
\[
\frac{\mu}{2}
\le \frac{2b_F}{\delta} |x_{\vep, \delta}-y_{\vep, \delta}|^2 +C\ell \vep
\]
Choosing $\vep>0$ small such that 
\[
C\ell \vep \leq {\mu\over 4}
\]
and sending $\delta \to 0$ in \eqref{parabolic-contradiction}, by \eqref{delta_convergence2} we obtain a contradiction that $\mu/2\leq \mu/4$. Hence, the inequality \eqref{time regularity} follows. 

 In the second step, we show that
\begin{equation}\label{space regularity}
u(x,t)-u(y,t) \le a_F |x-y|^\alpha
\end{equation}
for all $t\in[0,T)$ and $x,y \in \Omega$. We omit the detailed proof for this inequality, as the main part resembles the proof of Proposition~\ref{prop uc}. The only difference is that, instead of \eqref{uc penalty}, we consider the positive maximum of 
\[
\Psi_\vep(x, y, t)=u(x, t)-u(y, t)-a_F |x-y|^\alpha+\vep\log {\tilde{d}(x)\over D}+\vep \log {\tilde{d}(y)\over D}-{\vep \over T-t}
\]
for $\vep>0$ small. In this case, the maximum cannot be attained at $t=0$ due to the homogeneous initial condition. We then use the parabolic Crandall-Ishii Lemma \cite[Theorem 8.3]{CIL} again to establish viscosity inequalities and complete the proof in the same way as Proposition~\ref{prop uc}.

We conclude the proof of the uniform continuity of the solution $u$ by combining \eqref{time regularity} and \eqref{space regularity}. Hence, it can be continuously extended to $\Oba \times [0,T)$. The conclusion that $u\equiv 0$ in $\Omega_T$ is now a direct consequence of Theorem~\ref{thm liouville general}. 
\end{proof}

\section{Examples}\label{sec:example}
Our general results in Theorem \ref{thm main} and Theorem \ref{thm liouville general} apply to various degenerate elliptic or parabolic operators. The assumption of uniform continuity of solutions is clarified in Theorem \ref{thm main2} and 
Theorem \ref{thm main3} under more specific conditions. 
In this section, we present concrete equations that help understand those assumptions.  

Let us provide a sufficient condition for (F5) for our convenience later. 

\begin{lem}[Sufficient condition on degeneracy]\label{lem f5}
Let $\Omega$ be a bounded $C^2$ domain in $\R^n$. Let $\sigma: \Omega\to \R^{n\times m}$ and $\psi: \Omega\to \R^n$ be continuous. Assume that there exists $L_1>0$ such that 
\begin{equation}\label{decompose1}
|\sigma(x)-\sigma(y)|\leq L_1|x-y|, \quad |\psi(x)-\psi(y)|\leq L_1|x-y|,
\end{equation}
and 
\begin{equation}\label{decompose2}
\sup_{x\in \Omega}{1\over d(x)}\max\{|\sigma(x)|, |\psi(x)|\}\leq L_1. 
\end{equation}
Assume that there exists $L_2>0$ such that $F\in C(\Omega\times \R^n\times \S^n)$ satisfies 
\begin{equation}\label{decompose3}
\begin{aligned}
F(y, q, Y)- F(x, p, X)\leq & L_2 \lambda_{max} \left(\sigma(x)^TX \sigma(x) - \sigma(y)^T Y \sigma(y)\right)\\
&\quad +L_2|\la \psi(x),  p\ra-\la \psi(y), q\ra|        
\end{aligned}
\end{equation}
for all $x, y\in\Omega$, $p, q\in \R^n$ and $X, Y\in \S^n$, where $\lambda_{max}(A)$ denotes the largest eigenvalue of $A\in \S^m$. Then (F5) holds for some $a_F,L>0$, $\alpha\in (0, 1]$ and $\beta\in (0, 1)$ all independent of $M>0$. 
\end{lem}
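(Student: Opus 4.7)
The plan is to deduce (F5) directly from the structural bound \eqref{decompose3} by estimating the second- and first-order contributions separately, using in an essential way the boundary decay of $\sigma$ and $\psi$ (namely $|\sigma(x)|, |\psi(x)|\le L_1 d(x)$) to absorb the singular factors $d(x)^{-2}$ and $d(x)^{-1}$ appearing in \eqref{a5-1} and \eqref{a5-2}. The constants $a_F$, $L$, $\alpha$, $\beta$ will be selected once at the outset and will not depend on $M$, so in particular the bounds \eqref{uc-bounds} are never used.

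Fix $a_F=1$ (any positive value works by scaling) and take $\alpha\in(0,1]$ to be chosen small. Given $x,y\in\Omega$ with $x\neq y$, $p,q\in\R^n$, $X,Y\in\S^n$, $\delta>0$ and $A\in\S^n$ satisfying \eqref{a5-1}, \eqref{a5-2}, \eqref{uc-mod-rev}, I would bound the right-hand side of \eqref{decompose3} in two pieces. For the second-order term, pick a unit eigenvector $v\in\R^m$ of $\sigma(x)^TX\sigma(x)-\sigma(y)^TY\sigma(y)$ realising $\lambda_{\max}$, form $\xi=(\sigma(x)v,\sigma(y)v)\in\R^{2n}$, and observe
\[
\lambda_{\max}\!\left(\sigma(x)^TX\sigma(x)-\sigma(y)^TY\sigma(y)\right)=\left\langle\begin{pmatrix}X&0\\0&-Y\end{pmatrix}\xi,\xi\right\rangle.
\]
Applying \eqref{a5-2}, the $A$-block contributes $\langle A(\sigma(x)v-\sigma(y)v),\sigma(x)v-\sigma(y)v\rangle$, which by \eqref{uc-mod-rev} and \eqref{decompose1} is at most $\alpha a_F L_1^2|x-y|^\alpha$. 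The singular block produces $\delta\bigl(d(x)^{-2}|\sigma(x)v|^2+d(y)^{-2}|\sigma(y)v|^2\bigr)\le 2\delta L_1^2$ by \eqref{decompose2}, and the $\delta I_{2n}$ block is bounded by $2\delta L_1^2 D^2$ where $D=\max_{\Oba}d$.

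For the first-order term, decompose $p=p_0+r_x$, $q=p_0+r_y$ with $p_0=\alpha a_F|x-y|^{\alpha-2}(x-y)$, so by \eqref{a5-1} one has $|r_x|\le\delta/d(x)$ and $|r_y|\le\delta/d(y)$. Write
\[
\langle\psi(x),p\rangle-\langle\psi(y),q\rangle=\langle\psi(x)-\psi(y),p_0\rangle+\langle\psi(x),r_x\rangle-\langle\psi(y),r_y\rangle.
\]
The first term is bounded by $\alpha a_F L_1|x-y|^\alpha$ via \eqref{decompose1}, while the boundary-decay estimate $|\psi(x)|\le L_1 d(x)$ gives $|\langle\psi(x),r_x\rangle|\le L_1\delta$ and similarly for $y$. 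Putting everything into \eqref{decompose3},
\[
F(y,q,Y)-F(x,p,X)\le \alpha a_F L_1 L_2(L_1+1)|x-y|^\alpha+C_0\delta
\]
for a constant $C_0=C_0(L_1,L_2,D)$ independent of $M$, $\alpha$, $a_F$.

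The only remaining step is to fix $\alpha\in(0,1]$ small enough that $\beta:=\alpha L_1 L_2(L_1+1)<1$, and set $L:=C_0$. This yields \eqref{uc-cond} with parameters independent of $M$, completing the verification of (F5). I do not anticipate any real obstacle; the cleanest bookkeeping point is keeping track of the matrix inequality testing direction $\xi=(\sigma(x)v,\sigma(y)v)$ so that the singular $d(x)^{-2},d(y)^{-2}$ factors get absorbed by the quadratic decay of $|\sigma(x)|,|\sigma(y)|$ at the boundary, which is precisely where assumption \eqref{decompose2} pays off.
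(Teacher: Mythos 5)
Your proof is correct and follows essentially the same route as the paper: estimating the first-order term via the Lipschitz and decay assumptions on $\psi$, and the second-order term by testing the block matrix inequality \eqref{a5-2} against vectors of the form $(\sigma(x)\xi,\sigma(y)\xi)$ so that the singular weights $d(x)^{-2},d(y)^{-2}$ are cancelled by the quadratic boundary decay of $\sigma$, then feeding both bounds into \eqref{decompose3} and choosing $\alpha$ small. The only cosmetic differences are that you test with a single eigenvector $v$ realising $\lambda_{\max}$ rather than first recording the full matrix inequality, and that you normalise $a_F=1$ up front; neither changes the substance.
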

\begin{proof}
Fix $M>0$ arbitrarily. Suppose that we have $\delta>0$, $x, y\in \Omega$ with $x\neq y$, $p, q\in \R^n$ and $X, Y\in \S^n$ satisfying \eqref{uc-bounds}, and 
\begin{equation}\label{uc-cond-gradient}
\begin{aligned}
&|p-\alpha a_F |x-y|^{\alpha-2}(x-y)| \leq \delta d(x)^{-1}, \\
&|q- \alpha a_F |x-y|^{\alpha-2}(x-y)|\leq  \delta d(y)^{-1},  
\end{aligned}
\end{equation}
as well as
\begin{equation}\label{matrix inequality}
 \begin{pmatrix} X & 0\\ 0& -Y\end{pmatrix}\leq \begin{pmatrix} A & -A \\ -A & A\end{pmatrix}+\delta \begin{pmatrix} {d(x)^{-2}} I_n & 0\\ 0 & {d(y)^{-2}} I_n \end{pmatrix}+ \delta I_{2n} 
\end{equation}
with $A=A(x, y)\in \S^n$ satisfying 
\begin{equation}\label{uc-mod-holder}
A\leq \alpha a_F |x-y|^{\alpha-2} I_n. 
\end{equation}
It follows from \eqref{decompose2} and \eqref{uc-cond-gradient} that 
\[
\begin{aligned}
&|\la p, \psi(x)\ra-\alpha a_F |x-y|^{\alpha-2}\la x-y, \psi(x)\ra| \leq L_1\delta, \\
&|\la q, \psi(y)\ra - \alpha a_F |x-y|^{\alpha-2}\la x-y, \psi(y)\ra|\leq L_1 \delta,  
\end{aligned}
\]
which by the Lipschitz continuity of $\psi$ in \eqref{decompose1} implies that 
\begin{equation}\label{lip1}
|\la p, \psi(x)\ra-\la q, \psi(y)\ra|\leq \alpha a_F L_1 |x-y|^\alpha +2L_1\delta. 
\end{equation}
On the other hand, multiplying \eqref{matrix inequality} by $(\sigma(x)\xi, \sigma(y)\xi)$ from left and right for any $\xi\in \R^m$, we get
\[
\begin{aligned}
&\la X\sigma(x)\xi,\sigma(x)\xi \ra- \la  Y\sigma(y)\xi, \sigma(y)\xi \ra \\
&\le \la A(\sigma(x)-\sigma(y))\xi,(\sigma(x)-\sigma(y))\xi \ra 
+ 2\delta|\xi|^2\left\{\sup_{x\in \Omega}{|\sigma(x)|^2\over d(x)^2}+ \sup_{x\in \Omega}{|\sigma(x)|^2}\right\}, 
\end{aligned}
\]
which by \eqref{decompose1}, \eqref{decompose2}, \eqref{uc-mod-holder} and the boundedness of $\Omega$ yields 
\begin{equation}\label{lip2}
\sigma(x)^TX \sigma(x)- \sigma(y)^T Y \sigma(y    )\leq (\alpha a_F L_1^2 |x-y|^\alpha+2\delta L_1^2(1+D^2)) I_n,
\end{equation}
where $D=\max_{\Oba} d$. 
Applying \eqref{decompose3} with \eqref{lip1} and \eqref{lip2}, we obtain
\[
F(y, q, Y)-F(x, p, X)\leq \alpha a_F (1+L_1)L_1L_2|x-y|^\alpha+2\delta L_1L_2(1+L_1(1+D^2)).
\]
Letting $\alpha>0$ small so that $\beta=\alpha (1+L_1)L_1L_2<1,$
we complete the verification of (F5) with  $L=2L_1L_2(1+L_1(1+D^2))$. It is clear that none of $a_F, L$, $\alpha, \beta$ depends on $M>0$.
\end{proof}

Now let us discuss several concrete examples. 
\begin{example}[Laplace-type equation]\label{ex1}
Let $\Omega \subset \R^n$ be a bounded domain of $C^2$ class, and let $\mu \ge 2$ be fixed. We consider the Schr{\"o}dinger-type equation
\begin{equation}\label{linear eq ex1}
u - d(x)^{\mu} \Delta u =0 \quad \text{in $\Omega$,}
\end{equation}
where $d(x)$ denotes the distance from $x\in \Oba$ to $\partial \Omega$.
It is clearly a special case of \eqref{nonlinear eq} with $F(x, p, X)= - d(x)^{\mu}\tr X$. 
 It is easily seen that $F$ satisfies the assumptions (F1)--(F4). By applying Lemma \ref{lem f5} with $\sigma(x)=d(x)^{\mu\over 2} I_n$ and $\psi(x)=0$, we see that the assumption (F5) also holds in this case.

The condition $\mu\ge2$ is optimal. In fact, for $0<\mu<2$ and $\Omega= B_1$ it is known that there exist infinitely many bounded classical solutions of \eqref{linear eq ex1}. See \cite[Example~5.2]{BiPu}. 
\end{example}

We next revisit \cite[Example~5.3]{BiPu} with our approach. 
\begin{example}[Laplace-type equation with a drift term]\label{ex2}
Let $B_1 \subset \R^n$ be the ball centered at $0$ with radius $r=1$. Let $\mu \ge 2$ and fix $0\le \tau \le \mu -1$. We choose a vector-valued function $b=(b_1, b_2, \cdots, b_n)\in C^1(B_1, \R^n)$ such that 
\[
b(x)= d(x)^{-\tau}\frac{x}{|x|} \quad \text{for $\frac{3}{4}<|x|<1$},
\]
and consider the Schr{\"o}dinger-type equation
\begin{equation}\label{linear eq ex2}
u - d(x)^{\mu} \Delta u - d(x)^{\mu} \la b(x), \nabla u\ra =0 \quad \text{in $B_1$.}
\end{equation}
It is a special case of \eqref{nonlinear eq} with 
\[
F(x, p, X)= - d(x)^{\mu}\tr X - d(x)^{\mu}\la b(x), p \ra.
\]
It is easy to see that $F$ satisfies the assumptions (F1), (F2) and (F4). It is not difficult to show (F3) in this case, since $0\leq \tau\leq \mu-1$.  
The assumption (F5) can be verified by using Lemma \ref{lem f5} with $\sigma(x)= d(x)^{\mu\over 2}I_n$ and $\psi(x)=d(x)^\mu b(x)$. It is clear that $F$ satisfies \eqref{decompose3}. Note also that, due to $\mu\geq \tau+1$, 
\[
|\psi(x)-\psi(y)|=|d(x)^\mu b(x) - d(y)^\mu b(y)|\le C|x-y|
\]
for some $C>0$. We thus obtain \eqref{decompose1} and \eqref{decompose2}. 

Hence, by Theorem~\ref{thm main2}, we see that $u\equiv 0$ is the only bounded continuous viscosity solution to Schr{\"o}dinger-type equation \eqref{linear eq ex2} when $\mu\geq 2$ and $0\leq \tau\leq\mu-1$. It is shown in \cite[Example~5.3]{BiPu} that there exist infinitely many bounded classical solutions of \eqref{linear eq ex2} when $\tau>\mu -1$. Hence, our result is optimal in this case too. 
\end{example}

We can easily extend our application in the previous example to a general class of fully nonlinear elliptic equations of Hamilton-Jacobi-Isaacs type.

\begin{example}[Second order Hamilton-Jacobi-Isaacs equations]\label{ex3}
Assume that $\Omega\subset \R^n$ is a bounded domain of $C^2$ class.  
Let us look into the following fully nonlinear elliptic equation
\begin{equation}\label{hji eq}
    u-\inf_{\theta \in \Theta} \sup_{\lambda \in \Lambda}\,\left\{\tr (\Phi_{\lambda,\theta} (x)\nabla^2 u)+ \la b_{\lambda, \theta} (x), \nabla u \ra\right\} =0 \quad \text{in $\Omega$,}
\end{equation}
where $\Lambda$ and $\Theta$ are  index sets, and $\Phi_{\lambda, \theta}: \Omega\to \S^n$ and $b_{\lambda,\theta}: \Omega\to \R^n$ are continuous for each $\lambda\in \Lambda$ and $\theta \in \Theta$. Assume that for all $x\in \Omega$, $\lambda \in \Lambda$ and $\theta \in \Theta$, 
\[
\Phi_{\lambda, \theta}(x)=\sigma_{\lambda, \theta}(x)\sigma_{\lambda, \theta}(x)^T
\]
for some $\sigma_{\lambda, \theta}(x)\in \R^{n\times m}$ with a positive integer $m$. We thus see that 
\begin{equation}\label{hji-op}
F(x, p, X)=-\inf_{\theta \in \Theta} \sup_{\lambda \in \Lambda}\,\left\{\tr (\Phi_{\lambda,\theta} (x)X)+ \la b_{\lambda, \theta} (x), p \ra \right\} 
\end{equation}
is elliptic in the sense of (F2). 
We further assume that $\sigma_{\lambda, \theta}$ and $b_{\lambda, \theta}$ satisfy \eqref{decompose1} and \eqref{decompose2} with $L_1$ independent of $\lambda$ and $\theta$. Then it is easily seen that (F1), (F2) and (F4) hold in this case too.

Adopting the same arguments in Example \ref{ex2}, for each $\lambda\in \Lambda$ and $\theta\in \Theta$, we can verify all assumptions in Lemma \ref{lem f5} for 
\begin{equation}\label{isaccs1}
F_{\lambda, \theta}(x, p, X)=-\tr (\Phi_{\lambda,\theta} (x)X)- \la b_{\lambda, \theta} (x), p \ra
\end{equation}
 and use Lemma \ref{lem f5} to get 
\[
F_{\lambda, \theta}(y, q, Y)-F_{\lambda, \theta}(x, p, X)\leq \beta a_F |x-y|^\alpha+L\delta.
\]
 Since the choices of $\alpha, \beta, a_F, L$ are independent of $\lambda$ and $\theta$, by taking the supremum over $\theta\in \Theta$ and infimum over $\lambda\in \Lambda$, we immediately prove (F5).  
 \end{example}

\begin{example}[Parabolic Hamilton-Jacobi-Isaacs equations]\label{ex4}
    We can similarly obtain a Liouville-type result for parabolic equations of Hamilton-Jacobi-Isaacs type like
    \begin{equation}\label{parabolic hji eq}
    u_t+u-\inf_{\theta \in \Theta} \sup_{\lambda \in \Lambda}\,\left\{\tr (\Phi_{\lambda,\theta} (x)\nabla^2 u)+ \la b_{\lambda, \theta} (x), \nabla u \ra\right\} =0 \quad \text{in $\Omega\times (0, T)$,}
\end{equation}
where $\Omega$ is a $C^2$ bounded domain in $\R^n$, $\Phi_{\lambda, \theta}$ and $b_{\lambda, \theta}$ are the same as in Example \ref{ex3}. In this case, (F6) also holds in addition to (F1) through (F5). In fact, since Lemma \ref{lem f5} holds with constants $L_1, L_2>0$,  by choosing
\[
b_F=(1+L_1)L_1L_2,\quad  \ell= 2L_1L_2 (1+L_1(1+D^2)),
\]
we can verify (F6) following the proof of Lemma \ref{lem f5} (with $\alpha$ replaced by $2$ and $\alpha a_F$ by $c$). Indeed, under the conditions \eqref{parabolic1} and \eqref{parabolic2}, we can show that the operator $F_{\lambda, \theta}$ given by \eqref{isaccs1} satisfies
\[
F_{\lambda, \theta}(y, q, Y)-F_{\lambda, \theta}(x, p, X)\leq b_F c |x-y|^2 +\ell \delta,
\]
which yields \eqref{parabolic uc-cond}.

Then our result in Theorem \ref{thm main3} implies that $u\equiv 0$ is the only bounded viscosity solution $u\in C(\Omega\times [0, T))$ of \eqref{parabolic hji eq} that satisfies $u(\cdot, t)\to 0$ uniformly in $\Oba$ as $t\to 0$. It is clear that \eqref{parabolic hji eq} includes as a special case of the linear equation 
\[
u_t+u-d(x)^\mu\Delta u-d(x)^{\tau} \la b(x), \nabla u\ra=0 \quad \text{in $\Omega\times (0, T)$}
\]
with $\mu\geq 2$, $\tau\geq 1$ and $b\in C^1(\Oba)$. 
\end{example}

Besides the elliptic and parabolic problems, we can also apply our results to first order nonlinear equations. 
\begin{example}[First order Hamilton-Jacobi equations]
Let $\Omega\subset \R^n$ be a $C^2$ bounded domain. Consider the first order Hamilton-Jacobi equation
\begin{equation}\label{hj-eq}
    u+a(x)|\nabla u|^m=0 \quad\text{in $\Omega$},
\end{equation}
where $m> 1$ is a given exponent, and $a: \Omega\to [0, \infty)$ is a bounded nonnegative function satisfying
\begin{equation}\label{hj-lip}
    |a(x)^{1\over m}-a(y)^{1\over m}|\leq c|x-y| \quad \text{for all $x, y\in \Omega$}
\end{equation}
and
\begin{equation}\label{hj-decay}
    \sup_{x\in \Omega} {a(x)\over d(x)^\mu} \leq c 
\end{equation}
for some $c>0$ and $\mu\geq m$.  The case $m=1$ has essentially been discussed in Example \ref{ex3}.

The operator $F$ associated to \eqref{hj-eq} is of the form 
\[
F(x, p, X)=a(x)|p|^m, \quad \text{$x\in \Omega, p\in \R^n, X\in \S^n$.}
\]
Since $F$ does not depend on $X$, we simply write $F(x, p)$ below. 

One can easily verify the assumptions (F1)--(F4). Let us still focus on (F5). This time we fix $a_F>0$ and choose the constants $\alpha, \beta, L$ according to $M>0$. Take arbitrarily $x, y\in \Omega$ with $x\neq y$ and $p, q\in \R^n$ such that \eqref{uc-bounds} and \eqref{uc-cond-gradient} hold. By \eqref{uc-bounds}, we have 
\begin{equation}\label{hj-bound}
0\leq a(x)|p|^m\leq M, 
\end{equation}
On the other hand, by \eqref{uc-cond-gradient} and \eqref{hj-decay}, we get
\[
\begin{aligned}
|a(x)^{1\over m}p- \alpha a_F a(x)^{1\over m}|x-y|^{\alpha-2}(x-y)|\leq c^{1\over m}\delta d(x)^{{\mu\over m}-1},\\
|a(y)^{1\over m}q- \alpha a_F a(y)^{1\over m}|x-y|^{\alpha-2}(x-y)|\leq c^{1\over m}\delta d(y)^{{\mu\over m}-1}.\\
\end{aligned}
\]
It then follows from \eqref{hj-lip} that 
\begin{equation}\label{hj-eq1}
|a(x)^{1\over m}|p|-a(y)^{1\over m}|q|| \leq |a(x)^{1\over m}p-a(y)^{1\over m}q|\leq \alpha c a_F|x-y|^{\alpha} +2c^{1\over m} \delta D^{{\mu\over m}-1},
\end{equation}
where we still take $D=\max_{\Oba}d$. Applying the bound \eqref{hj-bound}, we can further obtain 
\[
|a(x)|p|^m-a(y)|q|^m|\leq \tilde{C} |a(x)^{1\over m}|p|-a(y)^{1\over m}|q||,
\]
where $\tilde{C}>0$ is a constant depending on $c, a_F, m, M$. Hence, using \eqref{hj-eq1} again, we deduce that
\[
|a(x)|p|^m-a(y)|q|^m|\leq \alpha c \tilde{C} a_F|x-y|^{\alpha} +2c^{1\over m} \tilde{C} \delta D^{{\mu\over m}-1},
\]
which implies the desired inequality
\[
F(y, q)-F(x, p)\leq \beta a_F|x-y|^\alpha +L \delta
\]
if we choose $L=2c^{1\over m} \tilde{C}D^{{\mu\over m}-1}$ and $\alpha\in (0, 1)$ small enough so that $\beta=\alpha c \tilde{C}<1$.

We conclude this example with a remark on the optimality of the condition $m\le \mu$. Let us consider a special case of the equation \eqref{hj-eq} with $n=1$, $a(x)=d(x)^\mu$, $\Omega=(0,2)$ and $m, \mu>0$, that is, 
\begin{equation}\label{first speical eq}
u+d(x)^\mu |u'|^m=0 \quad \text{in $(0,2)$}.
\end{equation}
 Since any continuous solution $u$ to \eqref{hj-eq} is nonpositive, we put $v=-u$ and consider the equation
\begin{equation}
v=x^\mu |v'|^m \quad \text{in $(0,\infty)$}.
\end{equation}
Let $a, b>0$. Then the solution $v$ satisfying $v(a)=b$ can be expressed locally as 
\begin{equation}
v(x)=
\begin{dcases}
\ \left[\pm \frac{m-1}{m-\mu}\left(x^{\frac{m-\mu}{m}}- a^{\frac{m-\mu}{m}}\right) + b^{\frac{m-1}{m}}\right]^{\frac{m}{m-1}},& \quad \text{if $m\neq 1,\, m\neq \mu$,}\\
\ b\exp \left( \pm\frac{ 1}{1-\mu}\left(x^{1-\mu}-a^{1-\mu}\right)\right),& \quad \text{if $m= 1,\, m\neq \mu$},\\
\ \left[\pm \frac{m-1}{m}\left(\log x -\log a \right) + b^{\frac{m-1}{m}}\right]^{\frac{m}{m-1}},& \quad \text{if $m=\mu\neq 1$, }\\
\ b\left(\frac{x}{a}\right)^{\pm 1},& \quad \text{if $m= \mu=1$.}    
\end{dcases}
\end{equation}
Based on this formula for $v$, assuming $\mu<m$, we obtain the following expression of $u$ for different ranges of $m$.
\begin{itemize}
\item If $m>1$, then
\[
u(x)=
\begin{dcases}
-\left(\frac{m-1}{m-\mu}\right)^{\frac{m}{m-1}}d(x)^{\frac{m-\mu}{m-1}},& \quad \text{for $0<x\le 1$},\\
-\left(\frac{m-1}{m-\mu}\right)^{\frac{m}{m-1}}\left[2-d(x)^{\frac{m-\mu}{m}}\right]^{\frac{m}{m-1}},& \quad \text{for $1<x<2$.}
\end{dcases}
\]
\item If $m<1$, then
\[
u(x)=
\begin{dcases}
-\left(\frac{m-\mu}{1-m}\right)^{\frac{m}{1-m}}\left[3-d(x)^{\frac{m-\mu}{m}}\right]^{-\frac{m}{1-m}},& \quad \text{for $0<x\le 1$},\\
-\left(\frac{m-\mu}{1-m}\right)^{\frac{m}{1-m}}\left[1+d(x)^{\frac{m-\mu}{m}}\right]^{-\frac{m}{1-m}},& \quad \text{for $1<x<2$.}
\end{dcases}
\quad 
\]
\item If $m=1$, then
\[
u(x)=
\begin{dcases}
-\exp \left(\frac{d(x)^{1-\mu}}{1-\mu}\right),& \quad \text{for $0<x\le 1$},\\
-\exp \left(\frac{2-d(x)^{1-\mu}}{1-\mu}\right),& \quad \text{for $1<x<2$.}
\end{dcases}
\]
\end{itemize}
These are bounded classical solutions of \eqref{first speical eq}. 
Therefore, the condition $m\le \mu$ is necessary to ensure that there are only trivial solutions. Such nontrivial bounded solutions $u$ demonstrate the boundary behavior $|u'(x)|\to \infty$ as $x\to 0+$ or $x\to 2-$, which is related to the singular boundary conditions studied in \cite{LaL2}. 
\end{example}

Our examples above suggest that the degeneracy rate of nonlinear elliptic or parabolic operators to guarantee nonexistence of nontrivial solutions is determined by the order of the equation. As a future problem, it would be interesting to discuss the critical exponent $\mu$ for degenerate fractional Laplacian $d(x)^\mu(-\Delta)^{s}$ for $0<s<1$ and investigate whether the same scenario extends to nonlocal operators. 


\end{document}